\newtheorem{theorem}{Theorem}
\newtheorem{theorema}{Theorem}
\newtheorem{lemma}{Lemma}
\newtheorem{corollary}{Corollary}
\theoremstyle{definition}
\newtheorem{definition}{Definition}
\newtheorem{remark}{Remark}
\newtheorem{question}{Question}
\newcommand{\Hom}{{\operatorname{Hom}}}
\newcommand{\HH}{{\operatorname{HH}}}
\newcommand{\Ext}{{\operatorname{Ext}}}
\DeclareMathOperator{\Aut}{Aut}
\DeclareMathOperator{\Der}{Der}
\DeclareMathOperator{\iDer}{Der_{\rm int}}
\DeclareMathOperator{\obs}{obs}
\def\sl{\mathfrak{sl}}
\def\e{\mathfrak{e}}
\def\f{\mathfrak{f}}
\def\h{\mathfrak{h}}
\title{On the Lie algebra structure of integrable derivations}
\author{Benjamin Briggs}
\address{
Mathematical Sciences Research Institute, Berkeley, California}
\email{briggs@math.utah.edu}
\author{Lleonard Rubio  y Degrassi}
\address{Dipartimento di Informatica - Settore di Matematica, Universit\`{a} degli Studi di Verona, Strada le Grazie 15 - Ca’ Vignal, I-37134 Verona, Italy}
\email{lleonard.rubioydegrassi@univr.it}
\thanks{{\it Funding.} During this work the first author was hosted by the Mathematical Sciences Research Institute in Berkeley, California,
  supported by the National Science Foundation under Grant No.\ 1928930. 
The second author has been partially  supported by EPSRC grant EP/L01078X/1 and the project PRIN 2017 - Categories, Algebras: Ring-Theoretical and Homological Approaches. He also acknowledges support from the project {\it REDCOM:  Reducing complexity in algebra, logic, combinatorics}, financed by the programme {\it Ricerca Scientifica di Eccellenza 2018} of the Fondazione Cariverona.\\}
\begin{document}

\maketitle
\begin{abstract}
Building on work of Gerstenhaber, we show that the space of integrable derivations on an Artin algebra $A$ forms a Lie algebra, and  a restricted Lie algebra if $A$ contains a field of characteristic $p$. We deduce that the space of integrable classes in $\HH^1(A)$ forms a (restricted) Lie algebra that is invariant under derived equivalences, and under stable equivalences of Morita type between self-injective algebras. We also provide negative answers to questions about integrable derivations posed by Linckelmann and by Farkas, Geiss and  Marcos.
\end{abstract}
\section{Introduction}

For any algebra $A$ over a commutative ring $k$, we consider the Lie algebra of $k$-linear derivations on $A$. The subspace of integrable derivations was introduced by Hasse and Schmidt in \cite{HS}, and has since been important in geometry and commutative algebra, especially in regards to deformations, jet spaces, and automorphism groups \cite{Ma,Mo,Vo}. 

More recently, integrable derivations have been  used as source of invariants in representation theory \cite{FGM,ML}. The first Hochschild cohomology Lie algebra $\HH^1(A)$, consisting of all derivations modulo inner derivations, is a critically important invariant, and the subspace $\HH^1_{\rm int}(A)$ spanned by integrable derivations is the main object of interest in this work. This class of derivations is known to have good invariance properties: Farkas, Geiss and Marcos prove that $\HH^1_{\rm int}(A)$ is an invariant under Morita equivalences \cite{FGM}, and Linckelmann proves, for self-injective algebras, that  $\HH^1_{\rm int}(A)$ is an invariant under stable equivalences of Morita type \cite{ML}.

The first part of this work builds heavily on Gerstenhaber's work on integrable derivations \cite{G}, which seems not to be well-known. Following this work, we prove that for Artin algebras, the integrable derivations form a (restricted) Lie algebra.

\begin{theorema}[See Corollary \ref{cor_Lie}]
\label{theo_lie}
If $A$ is an Artin algebra over a commutative Artinian ring $k$, then $\HH^1_{\rm int}(A)$  is a Lie subalgebra of $\HH^1(A)$. If moreover $A$ contains a field of characteristic $p$, then $\HH^1_{\rm int}(A)$  is a restricted Lie subalgebra  of $\HH^1(A)$. 
\end{theorema}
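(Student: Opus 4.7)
The plan is to reformulate integrability of a derivation as the existence of a lift to a formal algebra automorphism of $A[[t]]$, transport the Lie bracket (and, in characteristic $p$, the $p$-operation) from the group of such lifts down to $\operatorname{Der}_{\rm int}(A)$, and then descend further to $\HH^1(A)$.

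More precisely, a derivation $d$ is integrable iff it extends to a Hasse-Schmidt derivation $(1, d, d_2, d_3, \ldots)$, equivalently iff $\operatorname{id}_A + td$ lifts to a $k[[t]]$-algebra automorphism $\varphi$ of $A[[t]]$ reducing to the identity modulo $t$. The Artin hypothesis makes $A$ finite over $k$, so these lifts form the $k$-points of a pro-algebraic group $\mathbb{G}$, and the first-order coefficient map $\mathbb{G} \to \operatorname{Der}_k(A)$ has image exactly $\operatorname{Der}_{\rm int}(A)$. I would first observe that inner derivations are integrable: conjugation by the unit $1 + ta \in A[[t]]^\times$ realises $\operatorname{ad}_a$ as the first-order part of an element of $\mathbb{G}$. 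Since composition in $\mathbb{G}$ induces addition on first-order terms, $\operatorname{Der}_{\rm int}(A)$ is closed under adding inner derivations and descends to a well-defined subspace $\HH^1_{\rm int}(A) \subseteq \HH^1(A)$.

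The technical heart, following Gerstenhaber \cite{G}, is closure of $\operatorname{Der}_{\rm int}(A)$ under the Lie bracket. Given HS-derivations $\varphi(s)$ and $\psi(t)$ integrating $d$ and $d'$, I would analyse the two-parameter commutator
\[
\Sigma(s, t) \;=\; \varphi(s)\,\psi(t)\,\varphi(s)^{-1}\psi(t)^{-1} \;\in\; \mathbb{G}(k[[s, t]]),
\]
which has leading behaviour $1 + st \cdot [d, d'] + (\text{higher})$. From $\Sigma$ one constructs a one-parameter HS-derivation with first-order term $[d, d']$ by an inductive argument along the $t$-adic filtration on $\mathbb{G}$, showing at each level that the obstruction to extending vanishes because $\varphi$ and $\psi$ themselves exist. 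For the restricted structure in characteristic $p$, writing $\varphi(t) = 1 + T$ with $T = dt + d_2 t^2 + \cdots$, the vanishing of middle binomial coefficients modulo $p$ gives $\varphi(t)^p = 1 + T^p$, whose $t^p$-coefficient equals $d^p$; a parallel extraction argument (or, equivalently, the general fact that the tangent Lie algebra of a group scheme in characteristic $p$ carries a canonical $p$-operation from $p$-th powers in the group) then yields integrability of $d^p$.

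Descending to $\HH^1(A)$ is then formal: inner derivations form an integrable (restricted) Lie ideal in $\operatorname{Der}_k(A)$, so $\HH^1_{\rm int}(A) = \operatorname{Der}_{\rm int}(A)/\operatorname{Inn}(A)$ inherits the (restricted) Lie subalgebra structure from $\operatorname{Der}_{\rm int}(A)$. The main obstacle is the bracket closure step: extracting a one-parameter HS-derivation with prescribed first-order term $[d, d']$ out of the two-parameter commutator $\Sigma(s, t)$ is the delicate technical contribution of Gerstenhaber's method, and is where the Artin hypothesis is really used to control the obstructions.
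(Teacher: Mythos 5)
Your setup is faithful to the paper's: the commutator $\varphi\psi\varphi^{-1}\psi^{-1}$ of one-parameter automorphism families, the $p$-th power $\varphi^p$, the integrability of inner derivations via $\mathrm{ad}(1+ta)$, and the formal descent to $\HH^1$ all match Remarks 3--4 and Corollary \ref{cor_Lie}. But there is a genuine gap at the step you yourself flag as the ``technical heart.'' The commutator construction does \emph{not} produce a family with first-order term $[d,d']$: since every term of $\Sigma(s,t)$ is divisible by $st$, specialising $s=t$ yields an automorphism of the form $1+[d,d']\,t^2+\cdots$, i.e.\ it only exhibits $[d,d']$ as $[2,\infty]$-integrable (leading coefficient at $t^2$); likewise $\varphi(t)^p=1+d^pt^p+\cdots$ only exhibits $d^p$ as $[p,\infty]$-integrable. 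Your proposed fix --- an induction along the $t$-adic filtration in which ``the obstruction to extending vanishes because $\varphi$ and $\psi$ themselves exist'' --- is not justified: the obstruction to extending a putative family $1+[d,d']t+\cdots$ lives in $\HH^2(A)$ and there is no a priori reason it vanishes; the additivity of obstructions (Gerstenhaber's Theorem 1) controls only the automorphism $\Sigma$, which starts in order $2$.

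What is actually needed, and what the paper proves as Theorem \ref{th_m_int}, is the implication ``$[m,\infty]$-integrable $\Rightarrow$ $[1,\infty]$-integrable.'' Its proof is not a routine filtration argument: it requires (a) the localisation Lemma \ref{local_lem} to reduce to $k=k_p$; (b) Lemma \ref{lem_infty}, where the Artinian hypothesis forces the descending chain $\Der_{[m,n)}(A)$ to stabilise so that finite-order integrability suffices (together with \cite[Theorem 5]{G}); and (c) Gerstenhaber's Theorem 3, which eliminates all coefficients of $t^i$ with $m\nmid i$ from an automorphism in $\Aut_m$, after which one substitutes $t$ for $t^m$. None of these ingredients appears in your outline, and the Artin hypothesis enters through (b), not through ``controlling the obstructions'' on the commutator. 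To complete your proof you would need to either import Theorem \ref{th_m_int} explicitly or reprove the reparametrisation step (c) together with the stabilisation argument.
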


We also show that this (restricted) Lie algebra is invariant under derived equivalences and   stable equivalences of Morita type, extending the work of Farkas, Geiss and Marcos \cite{FGM} and Linckelmann \cite{ML}.

\begin{theorema}[See Theorem \ref{invarianceint}]
\label{theoinvarianceint}
Let $A$ and $B$ two finite dimensional split algebras over a field $k$. Assume either that $A$ and $B$ are derived equivalent, or that $A$ and $B$ are self-injective and stably equivalent of Morita type. Then $\HH_{\rm int}^1(A)\cong \HH_{\rm int}^1(B)$ as Lie algebras, and this is an isomorphism of restricted Lie algebras if $k$ is of positive characteristic.
\end{theorema}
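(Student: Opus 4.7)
My approach is to produce, in each case, a (restricted) Lie algebra isomorphism $\varphi : \HH^1(A) \tosim \HH^1(B)$ induced by the equivalence, and then show that $\varphi$ restricts to a bijection $\HH^1_{\rm int}(A) \tosim \HH^1_{\rm int}(B)$. By Theorem A, this restriction is automatically a morphism of (restricted) Lie algebras, yielding the claim. So the task splits in two: (1) produce $\varphi$ as a morphism of (restricted) Lie algebras, and (2) check it preserves integrability.

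For derived equivalences, (1) is standard: Rickard and Keller show that the Gerstenhaber algebra structure on $\HH^*(A)$ is a derived invariant, and the $p$-power operation on $\HH^1$ is also preserved (this can be extracted from the DG-level invariance of the Hochschild cochain complex). The real content is (2), that $\varphi$ sends integrable classes to integrable classes. For this I would use a Maurer--Cartan characterization of integrability coming out of the paper's framework: following Gerstenhaber, $[\delta] \in \HH^1(A)$ is integrable precisely when it extends to a Maurer--Cartan element in the shifted Hochschild DG Lie algebra $C^*(A)[1]$ over $k[[t]]$, i.e.\ to a formal one-parameter deformation of $A$. By Keller's work, a two-sided tilting complex realizing the derived equivalence deforms order by order, giving a bijection between formal deformations of $A$ and $B$ whose linearization is $\varphi$. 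Integrable classes therefore correspond.

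For a stable equivalence of Morita type between self-injective algebras, Linckelmann already proves invariance of $\HH^1_{\rm int}$ as a subspace of $\HH^1$, so it suffices to check that the transfer map underlying his argument coincides with the isomorphism induced on $\HH^{\geq 1}$ by the stable equivalence, which preserves the Gerstenhaber bracket (via the standard pseudo-bimodule/transfer construction). Compatibility with the $p$-power is handled analogously, by tracking it through the same transfer. The main obstacle throughout is the derived case: one needs the obstruction calculus controlling integrability to be transported compatibly by the DG Lie quasi-isomorphism $C^*(A)[1] \simeq C^*(B)[1]$ induced by the tilting complex. This compatibility, though nontrivial, is a consequence of the refinement of the Rickard--Keller isomorphism to the DG-Lie (not merely graded Lie) level; once this is invoked, the transfer of Maurer--Cartan elements, and hence of integrable classes, is formal.
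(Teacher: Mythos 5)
There is a genuine gap in the derived-equivalence half of your argument, located in the sentence ``$[\delta]\in \HH^1(A)$ is integrable precisely when it extends to a Maurer--Cartan element in $C^*(A)[1]$ over $k[[t]]$, i.e.\ to a formal one-parameter deformation of $A$.'' This conflates two different notions that Gerstenhaber both calls ``integrable.'' A Maurer--Cartan element of $C^*(A)[1]\otimes tk[[t]]$ lives in $C^2(A)\otimes tk[[t]]$ and is a deformation of the \emph{multiplication}; a derivation lives in $C^1(A)$, one degree lower, and its integrability means it is the linear term of a one-parameter family of \emph{automorphisms} $\alpha=1+\alpha_1t+\cdots\in\Aut_1(A\llbracket t\rrbracket)$ --- in Deligne-groupoid language, an element of the isotropy group of the trivial deformation, a $\pi_1$-type datum rather than a $\pi_0$-type one. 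So transporting Maurer--Cartan elements along the DG Lie quasi-isomorphism $C^*(A)[1]\simeq C^*(B)[1]$ addresses the wrong object, and the claimed ``bijection between formal deformations whose linearization is $\varphi$'' does not see integrable derivations at all. Even after repairing the statement to ``gauge transformations stabilizing the trivial deformation,'' you would face the problem that the exponential/gauge formalism is unavailable in characteristic $p$ --- which is the only case where the theorem has content, since in characteristic zero every derivation is integrable.

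The paper avoids this by a different mechanism: it uses the theorem of Huisgen-Zimmermann--Saor\'in and Rouquier that the identity component $\mathrm{Out}(A)^\circ$ of the outer automorphism group \emph{scheme} is a derived invariant, hence so is the set $\mathrm{Out}_1(A\llbracket t\rrbracket)^\circ$ of $k\llbracket t\rrbracket$-points reducing to $1$ at $t=0$; since this invariance and the invariance of $\HH^1$ are both realized by the same two-sided tilting complex, the \emph{map} $\pi\colon \mathrm{Out}_1(A\llbracket t\rrbracket)^\circ\to\HH^1(A)$, $\alpha\mapsto\alpha_1$, is a derived invariant, and $\HH^1_{\rm int}(A)=\operatorname{im}(\pi)$ by definition. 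Compatibility with the bracket then comes from Corollary~\ref{cor_Lie} plus Keller's invariance of the Gerstenhaber structure, and with the $p$-power from Keller's $B_\infty$-invariance combined with \cite[Theorem 2]{BR}; your one-line assertion that the $p$-operation ``can be extracted from the DG-level invariance'' is exactly the nontrivial point that needs these references. Your treatment of the stable-equivalence case is essentially the paper's (Linckelmann's transfer map, upgraded to a morphism of restricted Lie algebras via \cite{R} and \cite{BR}), but the derived case as written does not go through.
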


In the second part of this work we answer several  questions  posed in \cite{FGM} and \cite{L} about integrable derivations on group algebras. We  show by example that the Lie algebra of integrable derivations is not always solvable for every block of a finite group, proving a negative answer to {\cite[Question 8.2]{L}}.

\begin{theorema}[See Theorem \ref{notsolvable}]
\label{theonotsolvable}
Let $k$ be a field of characteristic $p\geq 3$, let $C_p$ the cyclic group of order $p$ and let $A=k(C_p\times C_p)$.
Then $\HH^1_{\rm int}(A)$ is not solvable. 
\end{theorema}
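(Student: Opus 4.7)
The plan is to identify $A$ with a truncated polynomial ring, exhibit three explicit derivations that span an $\sl_2(k)$-subalgebra of $\HH^1(A)$, verify each is integrable by writing down a Hasse--Schmidt extension, and invoke the simplicity of $\sl_2(k)$ in characteristic $p \geq 3$ to conclude non-solvability.

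First I would set $u = x - 1$ and $v = y - 1$ for generators $x, y$ of the two cyclic factors, giving $A = k[u,v]/(u^p, v^p)$. Since $A$ is commutative, inner derivations vanish and $\HH^1(A) = \Der_k(A)$, with $k$-basis $\{u^iv^j\partial_u, u^iv^j\partial_v : 0 \leq i,j < p\}$. I would then consider the three derivations
\[
e = u\partial_v, \qquad f = v\partial_u, \qquad h = u\partial_u - v\partial_v;
\]
a short bracket computation gives $[h,e] = 2e$, $[h,f] = -2f$, $[e,f] = h$, so $\{e,f,h\}$ span a copy of $\sl_2(k) \subset \HH^1(A)$.

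To show each of $e, f, h$ lies in $\HH^1_{\rm int}(A)$, I would exhibit, for each, a $k$-algebra homomorphism $\varphi: A \to A[[t]]$ with $\varphi \equiv \id \pmod t$ and first-order coefficient equal to the given derivation; such $\varphi$'s correspond bijectively to Hasse--Schmidt derivations extending the given $D$. For $e$, take $\varphi_e(u) = u$ and $\varphi_e(v) = v + tu$; this is well-defined because $(v + tu)^p = v^p + t^p u^p = 0$ in characteristic $p$. The case of $f$ is symmetric. For $h$, take the \emph{multiplicative} family $\varphi_h(u) = (1+t)u$ and $\varphi_h(v) = (1-t)v$; this is well-defined because $(1 \pm t)^p u^p = 0$ and $(1 \pm t)^p v^p = 0$, and the coefficient of $t$ is exactly $u\partial_u - v\partial_v = h$.

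The main (and essentially only) subtlety is the integrability of the semisimple element $h$: unlike the nilpotent $e$ and $f$, it cannot simply be exponentiated. The trick is to replace the additive deformation with a multiplicative one, which survives the relations $u^p = v^p = 0$ because raising $1 \pm t$ to the $p$-th power is harmless when it multiplies zero. With $e, f, h$ all integrable, $\HH^1_{\rm int}(A)$ contains $\sl_2(k)$, which is simple for $p \geq 3$ and in particular not solvable; hence $\HH^1_{\rm int}(A)$ is not solvable either.
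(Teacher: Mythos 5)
Your proposal is correct and follows essentially the same route as the paper: identify $k(C_p\times C_p)$ with $k[u,v]/(u^p,v^p)$, exhibit the $\sl_2$-triple $u\partial_v$, $v\partial_u$, $u\partial_u-v\partial_v$ inside $\HH^1_{\rm int}(A)$ by writing down explicit one-parameter automorphisms, and conclude non-solvability from the non-solvability of $\sl_2(k)$ for $p\geq 3$. The only cosmetic difference is that the paper integrates $u\partial_u$ and $v\partial_v$ separately and determines the full space $\HH^1_{\rm int}(A)$, while you integrate the three needed elements directly, which suffices.
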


By \cite[Theorem 2.2]{FGM} the group algebra of a finite $p$-group, over a field of characteristic $p$, always admits a non-integrable derivation. 
The authors of this work ask whether this is true of all finite groups with order divisible by $p$, and our next example shows that this is not the case.

\begin{theorema}[See Theorem \ref{Spint}]
\label{theospint}
Let $k$ a field of characteristic $p\geq 3$  and let $kS_p$ be the group algebra of the symmetric
group on $p$ letters. Then $\mathrm{dim}_k(\HH^1(kS_p))=1$ and  $\HH^1(kS_p)= \HH^1_{\rm int}(kS_p)$.
\end{theorema}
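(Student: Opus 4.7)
The plan is to reduce the statement to a computation on the Brauer correspondent of the principal block, and then to exhibit an explicit integrable generator of $\HH^1$. Since $C_p$ is a Sylow $p$-subgroup of $S_p$, every non-principal block of $kS_p$ has defect zero, hence is a matrix algebra, contributing nothing to either $\HH^1$ or $\HH^1_{\rm int}$. So both sides reduce to the principal block $B_0 \subset kS_p$, which has cyclic defect group $C_p$. By Rickard's derived equivalence for blocks of cyclic defect, $B_0$ is derived equivalent to its Brauer correspondent in $kN$, where $N := N_{S_p}(C_p) = C_p \rtimes C_{p-1}$ is the affine group $\mathrm{AGL}_1(\mathbb{F}_p)$. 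Since $B_0$ has $p - 1$ simple modules (indexed by the $p$-regular hook partitions of $p$), matching the total number of simples of $kN$ (the $p - 1$ characters of $C_{p-1}$), the Brauer correspondent is all of $kN$. Invoking Theorem B, it then suffices to prove $\HH^1(kN) = \HH^1_{\rm int}(kN) \cong k$.

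For the dimension, I would apply the additive decomposition $\HH^1(kG) \cong \bigoplus_{[g]} H^1(C_G(g), k)$. For $G = N$ the only non-vanishing contribution comes from the class of a generator of the Sylow subgroup $C_p$: its centralizer is $C_p$, contributing $H^1(C_p, k) \cong k$. The identity class and the classes $[e^j]$ for $1 \le j \le p - 2$ have centralizers $N$ and $C_{p-1}$ respectively, and $H^1(-, k)$ vanishes on both since $N^{\mathrm{ab}} \cong C_{p-1}$ has order coprime to $p$. So $\dim_k \HH^1(kN) = 1$.

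The main step is to exhibit a non-inner integrable derivation representing the generator. Setting $x := c - 1$ so that $kC_p = k[x]/(x^p)$, and letting $\alpha \in \mathbb{F}_p^\times$ satisfy $ece^{-1} = c^\alpha$, I would define an algebra map
\[
\Phi \colon kN \to kN[t], \qquad \Phi(c) = \sum_{n=0}^{p-1}\binom{1+t}{n}\, x^n, \qquad \Phi(e) = e,
\]
whose coefficients lie in $k[t]$ because $n! \in k^\times$ for $n < p$. Since $\Phi|_{t=0} = \mathrm{id}$, once $\Phi$ is known to be a homomorphism its coefficients assemble into a Hasse--Schmidt system integrating the first derivative $d$, which satisfies $d(c) = (1+x)\log(1+x)$ (truncated formal logarithm) and $d(e) = 0$. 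A short argument shows $d$ is not inner: the condition $[a, e] = 0$ forces $a \in kE$, and then $[a, c] \in kC_p$ forces $a \in k$, making $[a, c] = 0$. Hence $d$ represents a nonzero integrable class in $\HH^1(kN) \cong k$, completing the proof.

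The main obstacle will be verifying that $\Phi$ is a well-defined homomorphism. The relation $\Phi(c)^p = 1$ follows readily from the Frobenius on $kN[t]$ and $x^p = 0$; the harder relation is $\Phi(e)\Phi(c) = \Phi(c)^\alpha \Phi(e)$, which (given $e(1+x)e^{-1} = (1+x)^\alpha$) reduces to the polynomial identity $(1+x)^s(1+x)^t \equiv (1+x)^{s+t} \pmod{x^p}$ for indeterminates $s,t$.
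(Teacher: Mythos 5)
Your proposal is correct in substance and follows the same overall strategy as the paper: discard the defect-zero blocks, pass by a derived equivalence of Brauer tree algebras to the Brauer correspondent of the principal block, use Theorem \ref{invarianceint} to transfer integrability, and integrate an explicit outer derivation there. The differences are in the realization. The paper presents the target as the symmetric Nakayama algebra $N^{p-1}_{p-1}$ via its cyclic quiver, gets $\dim_k \HH^1(kS_p)=1$ from the general formula of Theorem \ref{dim} (Corollary \ref{S_p}), and integrates the diagonal derivation $f(a_1)=a_1$, for which $1+ft$ visibly preserves the monomial relations; you keep the group-algebra presentation $kN$ with $N=C_p\rtimes C_{p-1}$ (the same basic algebra, since $\mathbb{F}_p$ already contains the $(p-1)$-st roots of unity, so $kN$ is split), recompute the dimension by the centralizer decomposition of $N$ (correct: only the class of a generator of $C_p$ contributes), and integrate the logarithmic derivation via $c\mapsto c^{1+t}$. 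Your choice makes the two key verifications harder, and both need small patches. First, the relation $e\Phi(c)e^{-1}=\Phi(c)^{\alpha}$ requires the composite exponent law $\bigl((1+x)^{\alpha}\bigr)^{1+t}=(1+x)^{\alpha(1+t)}$, not merely the Vandermonde identity you cite; it does hold, e.g.\ because both sides are polynomials of degree at most $p-1$ in the exponent variable that agree at $u=0,1,\dots,p-1$. Second, your non-innerness argument has a slip: $[a,e]=0$ does not force $a\in kE$, since $C_{kN}(e)$ also contains $kE\widehat{C}$ with $\widehat{C}=\sum_{i}c^{i}$; however these extra elements are central in $kN$, so after discarding them the argument goes through ($[a,c]$ then has nonzero components in $e^{j}kC_p$ for $j\neq 0$ unless $a\in k$, whence $[a,c]=0\neq d(c)$). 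Neither point is fatal, and the remaining steps (only the principal block has positive defect, the Brauer correspondent is all of $kN$, $\dim_k\HH^1(kN)=1$) check out.
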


Along the way, in Theorem \ref{dim} we give a formula for the dimension of $\HH^1(kS_n)$ for any $n$. The same formula has also been obtained independently in the recent work \cite{BKL}.  The first part of Theorem \ref{Spint}, that  $\mathrm{dim}_k(\HH^1(kS_p))=1$, is an immediate consequence.

\subsection*{Outline} The sections of this paper can be read independently. 
In Section \ref{sec_int_Ger} we survey some of  work of Gerstenhaber on integrable derivations and note some consequences. Here we prove Theorems \ref{theo_lie} and \ref{theoinvarianceint}. The main result of Section \ref{cexamplesolv}  is Theorem \ref{theonotsolvable} which provides the first counterexample. In Section \ref{HH1symgroup} we give a formula for the dimension of $\HH^1(kS_n)$. The main result in Section \ref{cexampleexist}  is Theorem \ref{theospint} which provides the second counterexample. The appendix contains a dictionary explaining the more general terminology used by Gerstenhaber in \cite{G}, and how it relates to the setting considered here.

\section{Integrable derivations}\label{sec_int_Ger}
Let $A$ be an algebra over a commutative ring $k$, and let $\Der(A)$ denote the space of $k$-linear derivations on $A$. 
Gerstenhaber investigated integral derivations in \cite{G}, but since his work was written in substantial generality, in the language of ``composition complexes", many of its results are not well-known. In this section we present some of the results of \cite{G} in more familiar terms (but in particular Lemma \ref{local_lem} and its consequences are new to this work). The main result of this section is that if $A$ is an Artin algebra, the class of integrable derivations forms a Lie subalgebra of $\Der(A)$.

In order to define integrable derivations we consider the $k$-algebras $A[t]/(t^n)$  
and their limit $A\llbracket t\rrbracket$. 
We denote by $\Aut_1(A[t]/(t^n))$ the group of $k[t]/(t^n)$-algebra automorphisms $\alpha$ which yield the identity modulo $t$. Any such automorphism can be expanded
\[
\alpha = 1 + \alpha_1t+\alpha_{2}t^{2}+\cdots + \alpha_{n-1}t^{n-1}
\]
for some $k$-linear maps $\alpha_i\colon A\to A$. The first of these, $\alpha_1$, is a derivation on $A$, and in general the maps satisfy $\alpha_i(xy)=x\alpha_i(y) + \alpha_1(x)\alpha_{i-1}(y)+\dots + \alpha_i(x)y$ for all $i$. A sequence $\alpha_1,...,\alpha_{n-1}$ of linear endomorphsims of $A$ satisfying these identities is called a Hasse-Schmidt derivation of order $n$. One similarly interprets elements $\alpha\in \Aut_1(A\llbracket t\rrbracket)$ as infinite sequences $\alpha_1,\alpha_2,...$ of linear endomorphsims of $A$, and these are called Hasse-Schmidt derivations of infinite order. These were studied in \cite{HS} under the name higher derivation.

Extending this slightly, we set $\Aut_m(A[t]/(t^n))$ to be the group of $k[t]/(t^n)$-algebra automorphisms $\alpha$ which yield the identity modulo $t^m$. Any such automorphism can be expanded
\[
\alpha = 1 + \alpha_mt^m+\alpha_{m+1}t^{m+1}+\cdots + \alpha_{n-1}t^{n-1}
\]
for some $\alpha_i\colon A\to A$. The first nonvanishing coefficient $\alpha_m$ is always a derivation on $A$. The same goes for the power series algebra $A\llbracket t\rrbracket$ and the corresponding automorphisms in $\Aut_m(A\llbracket t\rrbracket)$.

\begin{definition}[\cite{HS,G,R}]
A  $k$-linear derivation  $D$ on $A$ will be called $[m,n)$-{integrable} if there is an automorphism $\alpha\in \Aut_m(A[t]/(t^n))$ such that $D=\alpha_m$. 
We say that $D$ is {$[m,\infty)$-integrable} if it is {$[m,n)$-integrable} for all $n$. And we say that $D$ is {$[m,\infty]$-integrable} if there is an automorphism $\alpha\in \Aut_m(A\llbracket t\rrbracket)$ such that $D=\alpha_m$.  
We will write
\[
\Der_{[m,n)}(A) =\big\{ \ k\text{-linear } [m,n)\text{-integrable derivations on } A\ \big\},
\]
and we will use similar notation for $[m,\infty)$- and $[m,\infty]$-integrable derivations.

The derivations that are $[1,\infty]$-integrable are simply known as integrable, and we will also use the notation $\iDer(A)=\Der_{[1,\infty]}(A)$.
\end{definition}

\begin{remark}
The $[m,n)$-{integrable} derivations are closed under addition and subtraction---that is, $\Der_{[m,n)}(A)$ is an additive subgroup of $\Der(A)$. If $\alpha,\alpha'\in \Aut_m(A[t]/(t^n))$ then the automorphisms
\[
\alpha\alpha' = 1 + (\alpha_m+\alpha'_m)t^m+\cdots \quad\text{and}\quad \alpha^{-1}= 1-\alpha_mt^m+\cdots
\]
are witness to fact that $\alpha_m+\alpha'_m$ and $-\alpha_m$ are  $[m,n)$-{integrable}. The same goes for for $[m,\infty)$- and $[m,\infty]$-integrable derivations.

In the case $m=1$, $\Der_{[1,n)}(A)$ is moreover a submodule of $\Der(A)$ over ${\rm Z}(A)$---the centre of $A$. This can be seen from the automorphism
\[
1+z\alpha_1t+z^2\alpha_2t^2+\cdots
\]
that exists for any $\alpha \in \Aut_1(A[t]/(t^n))$ and $z\in {\rm Z}(A)$.
\end{remark}                                   

\begin{remark}
All inner derivations are integrable. Indeed if $a\in A$, then $1+at$ is a unit in $A\llbracket t\rrbracket$ and the automorphism ${\rm ad}(1+at) = 1 + [a,-]t+ \cdots $ shows that $[a,-]$ is integrable.
\end{remark}

\begin{definition}
The first Hochshild cohomology of $A$ over $k$ is the quotient $\HH^1(A)$ of $\Der(A)$ by the space of inner derivations. We denote by $\HH^1_{\rm int}(A)$ the image of $\iDer(A)$ in $\HH^1(A)$. By the previous two remarks, a class in Hochschild cohomology is integrable if and only if all or any one of its representatives is integrable.
\end{definition}

\begin{remark} \label{rem_bracket_restr}
If $D$ and $D'$ are derivations that are $[m,n)$-integrable and $[m',n)$-integrable respectively, then the commutator $[D,D']$ is an $[m+m',n)$-integrable derivation. Indeed, a computation shows that
\[
\alpha \alpha'\alpha^{-1}\alpha'^{-1} = 1 + (\alpha_m\alpha'_{m'}-\alpha'_{m'}\alpha_{m})t^{m+m'}+\cdots 
\]
for $\alpha \in \Aut_m(A[t]/(t^n))$ and $\alpha' \in \Aut_{m'}(A[t]/(t^{n}))$.

If $A$ contains a field of characteristic $p$, then the $p$th power of any derivation is again a derivation, and together with the commutator bracket this makes $\Der(A)$ into a restricted Lie algebra. If $D$ is an $[m,n)$-integrable derivation then $D^p$ is a $[pm,n)$-integrable derivation. Indeed, this time one computes
\[
\alpha^p = 1 + \alpha_m^pt^{pm}+\cdots 
\]
for $\alpha \in \Aut_m(A[t]/(t^n))$. This structure is studied in \cite{R}.

This remark shows that $[m,n)$-integrable derivations arise even if one is interested only in $[1,n)$-integrable derivations.
\end{remark}

Gerstenhaber works locally in \cite{G}, assuming that $k$ is an algebra over $\mathbb{Z}_p$ (the integers localised at $p$) for some prime $p$. Using the next lemma, in which we write $k_p=k\otimes_{\mathbb{Z}}\mathbb{Z}_p$ and $A_p=A\otimes_{\mathbb{Z}}\mathbb{Z}_p$, we can readily reduce to this case. Readers interested in algebras over local rings or fields can disregard the lemma.

\begin{lemma}\label{local_lem}
Let $A$ be a Noether algebra over a commutative Noetherian ring $k$. A $k$-linear derivation on $A$ is {$[m,n)$-integrable} if and only if for all primes $p$, the induced $k_p$-linear derivation on $A_p$ is {$[m,n)$-integrable}. 
\end{lemma}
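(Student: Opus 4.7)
The ``only if'' direction is trivial: applying the base change $-\otimes_k k_p$ to a witnessing automorphism $\alpha\in\Aut_m(A[t]/(t^n))$ yields a witness $\alpha\otimes k_p$ over $k_p$.

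For the converse I would induct on $n$; the base case $n\le m+1$ is automatic because $1+Dt^m$ always lies in $\Aut_m(A[t]/(t^{m+1}))$. For the inductive step, suppose the lemma holds at level $n$, and assume $D_p$ is $[m,n+1)$-integrable for every prime $p$. By the inductive hypothesis, $D$ is $[m,n)$-integrable, so we may choose a global partial Hasse-Schmidt derivation $\alpha=(D,\alpha_{m+1},\dots,\alpha_{n-1})\in\Aut_m(A[t]/(t^n))$. To extend $\alpha$ by one further coefficient $\alpha_n$ one must solve
\[
d\alpha_n = -f_\alpha, \qquad f_\alpha(x,y):=\sum_{\substack{i+j=n\\ m\le i,j\le n-m}}\alpha_i(x)\alpha_j(y),
\]
with $d$ the Hochschild differential. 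A direct check shows $f_\alpha$ is a Hochschild $2$-cocycle, and $\alpha$ extends iff $[f_\alpha]=0$ in $\HH^2(A)$.

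I would next establish a local-global principle for $\HH^2$. Since $A$ is Noether over Noetherian $k$, the bar resolution is compatible with flat base change, and so $\HH^2(A_p)\cong\HH^2(A)\otimes_{\mathbb Z}\mathbb Z_{(p)}$. Moreover, for any abelian group $M$ the natural map $M\to\prod_p M\otimes_{\mathbb Z}\mathbb Z_{(p)}$ is injective, since an element of the kernel is annihilated, for each prime $p$, by some integer coprime to $p$, hence by $\pm1$. Thus it suffices to verify that $[f_\alpha]$ vanishes in each $\HH^2(A_p)$.

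The main obstacle is that the local hypothesis supplies only \emph{some} local witness $\tilde\alpha^{(p)}\in\Aut_m(A_p[t]/(t^{n+1}))$, generally unrelated to the base change of our fixed global $\alpha$. The truncation of $\tilde\alpha^{(p)}$ to order $n$ and $\alpha\otimes k_p$ both lie in the torsor of order-$n$ partial Hasse-Schmidt derivations with leading term $D_p$, acted upon by $\Aut_{m+1}(A_p[t]/(t^n))$, so they differ by some gauge element $\gamma$. The crux of the argument is to show that the obstruction class is gauge-invariant, i.e.\ $[f_{\alpha\gamma}]=[f_\alpha]$ in $\HH^2$. This reduces to a computation in the Hochschild complex using the cup-product Leibniz rule, the Hasse-Schmidt identities satisfied by both $\alpha$ and $\gamma$, and the graded commutativity of $\smile$ on cohomology, which forces the problematic degree-one cross terms to be exact. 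This bookkeeping is made systematic by Gerstenhaber's composition-complex formalism of \cite{G}. Granting gauge-invariance, the truncation of $\tilde\alpha^{(p)}$ extends by construction, so $[f_{\alpha\otimes k_p}]=0$ in each $\HH^2(A_p)$; the principle above then gives $[f_\alpha]=0$ in $\HH^2(A)$, closing the induction.
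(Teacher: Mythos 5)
Your forward direction and your local-global principle for $\HH^2$ are fine, but the crux of your inductive step --- the claim that the obstruction class is gauge-invariant, i.e.\ that $[f_{\alpha\gamma}]=[f_\alpha]$ for every gauge element $\gamma\in\Aut_{m+1}(A_p[t]/(t^n))$ --- is false, and the argument collapses there. By Gerstenhaber's additivity (Theorem \ref{th_obs}, part (\ref{th_obs_1})) one has $\obs(\alpha\gamma)=\obs(\alpha)+\obs(\gamma)$, and for $\gamma=1+\gamma_{m+1}t^{m+1}+\cdots$ the class $\obs(\gamma)=\big[\sum_{i+j=n,\ i,j\geq m+1}\gamma_i\smile\gamma_j\big]$ has no reason to vanish once $n\geq 2m+2$. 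Indeed, if your gauge-invariance held, the obstruction would depend only on the leading term $D$, so $[m,n+1)$-integrability of $D$ would force \emph{every} order-$n$ witness to be unobstructed; Subsection \ref{sub_obs} explicitly cautions that this is not so --- only \emph{some} witness need be unobstructed. Consequently, knowing that the truncation of $\tilde\alpha^{(p)}$ extends tells you nothing about $\obs(\alpha\otimes k_p)$, and the vanishing you need in each $\HH^2(A_p)$ does not follow. (Comparing instead the full cosets $\obs(\alpha)+\obs(\Aut_{m+1})$ is not an easy repair either: that subgroup is the image of a nonlinear parameter space and does not obviously commute with localisation.)

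The paper's proof is entirely different and more elementary: no induction on $n$ and no obstruction theory. Given a local witness over $A_p$, the finiteness hypotheses let one write its coefficients as $\beta_i/u$ with the $\beta_i$ defined over $k$ and $u$ coprime to $p$; rescaling by powers of $u$ and of a further denominator-clearing integer $v$ makes the Hasse--Schmidt identities hold integrally, producing a global automorphism $\gamma_p\in\Aut_m(A[t]/(t^n))$ whose $t^m$-coefficient is $w_pD$ with $w_p$ coprime to $p$. Since the $w_p$ generate the unit ideal of $\mathbb{Z}$, a product $\gamma_{p_1}^{a_1}\cdots\gamma_{p_i}^{a_i}$ with $\sum_j a_jw_{p_j}=1$ has $t^m$-coefficient exactly $D$. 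You should look for a fix along these lines rather than via obstruction classes.
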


\begin{proof}
The forward implication is clear. Conversely, assume that for each prime $p$ there is an automorphism $\alpha= 1+ \alpha_mt +\cdots + \alpha_{n-1}t^{n-1}$ in $\Aut_m(A_p[t]/(t^n))$ such that $\alpha_m=D$. 

Consider the element $(\alpha_i)\in \bigoplus_{i=m}^{n-1}\Hom_{k_p}(A_p,A_p)$. Since $k$ is Noetherian and $A$ is finitely generated as a $k$-module, $\bigoplus_{i=m}^{n-1}\Hom_{k_p}(A_p,A_p)\cong \big(\bigoplus_{i=m}^{n-1}\Hom_{k}(A,A)\big)_p$, and therefore there is a sequence $(\beta_i)\in \bigoplus_{i=m}^{n-1}\Hom_{k}(A,A)$ and an integer $u$ coprime to $p$ such that $(\alpha_i)=(\frac{\beta_i}{u})$ in $\bigoplus_{i=m}^{n-1}\Hom_{k_p}(A_p,A_p)$, and we can assume that $\beta_m= uD$. There is then an equality 
\[
1+ u^m\alpha_mt +\cdots + u^{n-1}\alpha_{n-1}t^{n-1} =  1+ u^{m-1}\beta_mt^m +\cdots + u^{n-2}\beta_{n-1}t^{n-1}
\]
in $\Aut_m(A_p[t]/(t^n))$. In particular, for each $m\leqslant i<n$ the Hasse-Schmidt identity
\[
u^{i-1}\beta_i(xy)-u^{i-1}x\beta_i(y) - u^{i-2}\beta_m(x)\beta_{i-m}(y)-\dots -u^{i-1} \beta_i(x)y =0  
\]
holds, when interpreted in $\Hom_{k_p}(A_p\otimes_{k_p}A_p,A_p)$. Since $\Hom_{k_p}(A_p\otimes_{k_p}A_p,A_p)\cong \Hom_{k}(A\otimes_{k}A,A)_p$ we may find an integer $v_i$ coprime to $p$ such that 
\[
v_i\big[u^{i-1}\beta_i(xy)-u^{i-1}x\beta_i(y) - u^{i-2}\beta_m(x)\beta_{i-m}(y)-\dots -u^{i-1} \beta_i(x)y\big] =0  
\]
holds in $\Hom_{k}(A\otimes_{k}A,A)$. Now set $v=v_m\cdots v_{n-1}$. It follows that the sequence 
\[
 (v^mu^{m-1}\beta_m, \ldots, v^{n-1} u^{n-2}\beta_{n-1})
\]
satisfies the Hasse-Schmidt identities in $\Hom_{k}(A\otimes_{k}A,A)$ for all $m\leqslant i<n$, and therefore defines an element of $\Aut_m(A[t]/(t^n))$. Now set
\[
\gamma_p = 1+v^mu^{m-1}\beta_mt^m+ \cdots+ v^{n-1} u^{n-2}\beta_{n-1}t^{n-1} \quad\text{and}\quad w_p=v^mu^m
\]
(up until now our notation has not indicated the dependence on $p$). Note that $w_pD=v^mu^{m-1}\beta_m$ by construction.

The ideal $(w_p ~:~ p\text{ is a prime}) \subseteq \mathbb{Z}$ contains an element coprime to every prime, and  is consequently the unit ideal. This means there are primes $p_1,...,p_i$ and integers $a_1,...,a_i$ such that $a_1w_{p_1}+\cdots +a_iw_{p_i}=1$. The automorphism 
\[
\gamma=\gamma_{p_1}^{a_1}\cdots \gamma_{p_i}^{a_i}\in \Aut_m(A[t]/(t^n))
\]
has as its $t^m$ coefficient $a_1w_{p_1}D+\cdots+ a_iw_{p_i}D=D$. Therefore $\gamma$ shows that $D$ is $[m,n)$-integrable.
\end{proof}

Certainly all $[m,\infty]$-integrable derivations are $[m,\infty)$-integrable. It seems to be open in general  whether the inclusion $\Der_{[m,\infty)}(A)\subseteq \Der_{[m,\infty]}(A)$ can be strict. However, the next result, which is essentially due to Gerstenhaber, shows that equality holds for Artin algebras.

\begin{lemma}\label{lem_infty}
Let $A$ be an Artin algebra over a commutative Artinian ring $k$. A $k$-linear derivation on $A$ is {$[m,\infty)$-integrable} if and only if it is  {$[m,\infty]$-integrable}.
\end{lemma}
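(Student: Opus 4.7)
The forward implication is immediate by truncation, so I need only tackle the converse. The plan is a Mittag-Leffler style compactness argument on the inverse system
\[
I_n:=\{\alpha\in\Aut_m(A[t]/(t^n))\ :\ \alpha_m=D\},
\]
whose limit realises exactly the $[m,\infty]$-integrable witnesses for $D$. Each $I_n$ is nonempty by the hypothesis of $[m,\infty)$-integrability, and the transition maps come from the obvious truncation $A[t]/(t^{n+1})\to A[t]/(t^n)$, so it remains to show $\varprojlim_n I_n\neq\emptyset$.

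The first observation is that $I_n$ is a torsor over the group $G_n:=\Aut_{m+1}(A[t]/(t^n))$: two elements of $I_n$ share their $t^m$-coefficient $D$, so their ratio has trivial leading term and lies in $G_n$. Setting $J_n^{(r)}:=\operatorname{image}(I_{n+r}\to I_n)$, a short lifting computation identifies $J_n^{(r)}$ with a single coset in $I_n$ of the subgroup
\[
H_n^{(r)}:=\{g\in G_n\ :\ g\text{ lifts to an element of }G_{n+r}\}\subseteq G_n.
\]
The Mittag-Leffler step is then to show that for each $n$ the descending chain $I_n=J_n^{(0)}\supseteq J_n^{(1)}\supseteq\cdots$ of nonempty sets stabilizes; by the coset description it suffices to show that the chain $H_n^{(r)}$ of subgroups stabilizes.

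For this I will use the normal series $G_n=\Aut_{m+1}\supseteq\Aut_{m+2}\supseteq\cdots\supseteq\Aut_n=\{1\}$, whose successive quotients embed, via $\alpha\mapsto\alpha_j$, as additive subgroups of $\Hom_k(A,A)$. Because $A$ is an Artin algebra over the Artinian ring $k$, the module $\Hom_k(A,A)$ has finite length, so every quotient in the series is an Artinian abelian group. A routine induction shows that the descending chain condition on subgroups passes through group extensions with Artinian abelian quotients, so $G_n$ itself has DCC on subgroups. This forces $H_n^{(r)}$—and hence its coset $J_n^{(r)}$—to stabilize in $r$ to a nonempty $J_n=\bigcap_r J_n^{(r)}$.

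The payoff is that the induced maps $J_{n+1}\to J_n$ are surjective: the image of $J_{n+1}^{(r)}$ in $I_n$ is exactly $J_n^{(r+1)}$, so once both chains have stabilized this image equals $J_n$. A sequential inverse limit of nonempty sets along surjections is nonempty, producing an element of $\varprojlim J_n\subseteq\varprojlim I_n$, which is the desired automorphism of $A\llbracket t\rrbracket$ with first coefficient $D$. The main technical obstacle is the DCC on subgroups of $G_n$; this is precisely where the Artinian hypothesis on both $k$ and $A$ enters essentially, and without it the chain $J_n^{(r)}$ can genuinely fail to stabilize.
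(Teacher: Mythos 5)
Your Mittag--Leffler framework is sensible in outline: the torsor description of $I_n$ under $G_n=\Aut_{m+1}(A[t]/(t^n))$, the identification of $J_n^{(r)}$ with a coset of $H_n^{(r)}$, and the deduction that $J_{n+1}\to J_n$ is surjective once both chains stabilize are all correct, as is the final passage to the inverse limit. The gap is in the stabilization step itself. You assert that, because $\Hom_k(A,A)$ has finite length as a $k$-module, the filtration quotients of $G_n$ are Artinian \emph{abelian groups}, so that $G_n$ has DCC on arbitrary subgroups. This is false: finite length gives DCC on $k$-submodules, not on additive subgroups. Already for $k=\overline{\mathbb{F}_p}$ and $A=k$ the group $\Hom_k(A,A)\cong k$ is an infinite-dimensional $\mathbb{F}_p$-vector space and admits infinite strictly descending chains of subgroups; the same failure occurs over any infinite field. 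The subgroups $H_n^{(r)}$ are images of group homomorphisms $G_{n+r}\to G_n$ and carry no evident $k$-module structure (the substitution $t\mapsto ct$ rescales the $t^j$-coefficient by $c^j$, so at best one gets closure under multiplication by $j$-th powers of scalars), so you cannot silently replace ``subgroup'' by ``submodule''. As written, the chain $H_n^{(r)}$ has no reason to stabilize, and this is exactly where the real content of the lemma lies.

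The paper's proof avoids this issue: after reducing to $k=k_p$ via Lemma~\ref{local_lem}, it applies DCC only to the chain $\Der_{[m,n)}(A)$ of submodules of the Artinian module $\Der(A)$ --- a chain for which the module structure is available --- and then invokes Theorem~5 of \cite{G}, which converts the stabilization of that chain into the implication from $[m,\infty)$-integrable to $[m,\infty]$-integrable. That theorem of Gerstenhaber is where the lifting argument actually happens, and it rests on the obstruction theory (obstruction orders being powers of $p$ after localization) rather than on a soft compactness argument. To repair your proof you would need either to import that result, or to show that the images of $H_n^{(r)}$ in the filtration quotients $\Aut_j/\Aut_{j+1}\subseteq\Hom_k(A,A)$ are stable under enough scalars for a legitimate DCC argument --- neither of which is routine.
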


\begin{proof}
By Lemma \ref{local_lem} we may assume that $k$ is an algebra over $\mathbb{Z}_p$, so that the results of \cite{G} apply. Since $\Der(A)$ is an Artinian $k$-module, the sequence of submodules $\Der_{[m,n)}(A)$ must eventually stablise, and so there is an $n$ such that  $\Der_{[m,n)}(A)=\Der_{[m,\infty)}(A)$. By {\cite[Theorem 5]{G}} this implies that $\Der_{[m,\infty)}(A)=\Der_{[m,\infty]}(A)$.
\end{proof}

It is easy to show that a $[1,\infty]$-integrable derivation is $[m,\infty]$-integrable for every $m$ (see \cite[Theorem 3.6.6]{R2}). The next result, due to Gerstenhaber, shows that the converse is also true.

\begin{theorem}\label{th_m_int}
Let $A$ be an Artin algebra over a commutative Artinian ring $k$. A $k$-linear derivation on $A$ is {$[1,\infty]$-integrable} if and only if it is  {$[m,\infty]$-integrable}. 
\end{theorem}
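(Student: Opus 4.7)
The plan is to handle the two implications separately. The forward implication, that $[1,\infty]$-integrability implies $[m,\infty]$-integrability, is elementary: given $\alpha = 1 + Dt + \alpha_2 t^2 + \cdots$ in $\Aut_1(A\llbracket t\rrbracket)$, I will apply the continuous $k$-algebra substitution $t \mapsto t^m$ to produce $\beta = 1 + Dt^m + \alpha_2 t^{2m} + \cdots$ in $\Aut_m(A\llbracket t\rrbracket)$ with $\beta_m = D$. The fact that $\beta$ is a $k\llbracket t \rrbracket$-algebra automorphism is immediate from the multiplicativity of the substitution, so this direction requires nothing beyond a formal computation.

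For the converse, that $[m,\infty]$-integrability implies $[1,\infty]$-integrability, my plan is to reduce to Gerstenhaber's setting. First, by Lemma \ref{local_lem}, I may assume $k$ is a $\mathbb{Z}_p$-algebra for some prime $p$, so that the results of \cite{G} apply. Second, by Lemma \ref{lem_infty}, the notions of $[m,\infty)$- and $[m,\infty]$-integrability coincide for Artin algebras, which lets me work with the truncated version throughout. Third, since $\Der(A)$ is an Artinian $k$-module, the descending chain $\Der_{[r,n)}(A)$ stabilizes in $n$ for each fixed $r$, which is the key finiteness input needed to invoke Gerstenhaber's theorem.

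With the preceding reductions in place, I would invoke the corresponding result from \cite{G} to conclude that the sets $\Der_{[r,\infty)}(A)$ coincide for all $r \geqslant 1$, so in particular any $[m,\infty)$-integrable derivation is $[1,\infty)$-integrable. The main technical obstacle is the descent from starting degree $m$ down to starting degree $1$: the Hasse--Schmidt identities satisfied by an element of $\Aut_m$ have a rather different shape from those satisfied by an element of $\Aut_1$, and the construction of a $[1,\infty)$-integration from an $[m,\infty)$-integration requires controlling obstructions whose natural home is Hochschild cohomology. It is the careful analysis of these obstructions, enabled essentially by the Artinian hypothesis, that constitutes the substance of Gerstenhaber's argument, and I would cite rather than reprove it.
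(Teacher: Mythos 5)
Your proposal is correct and follows essentially the same route as the paper: reduce to the case of a $\mathbb{Z}_p$-algebra via Lemma \ref{local_lem}, use Lemma \ref{lem_infty} together with the Artinian stabilization of the chain $\Der_{[1,n)}(A)$ to replace $[1,\infty]$ by some finite $[1,n)$, and then invoke Gerstenhaber for the descent from starting degree $m$ to starting degree $1$. The only thing left implicit is the precise mechanism, which is worth naming: the paper applies \cite[Theorem 3]{G} to an automorphism $1+Dt^m+\cdots$ in $\Aut_m(A[t]/(t^{mn+1}))$ to produce one supported on powers of $t^m$ only, after which the substitution $t^m\mapsto t$ (the inverse of your forward-direction trick) yields the required element of $\Aut_1(A[t]/(t^{n+1}))$.
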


\begin{proof}
By Lemma \ref{local_lem} we may assume that $k$ is an algebra over $\mathbb{Z}_p$. Let $D$ be an {$[m,\infty]$-integrable} derivation on $A$. By Lemma \ref{lem_infty} there is an $n$ such that $\Der_{[1,n)}(A)=\Der_{[1,\infty]}(A)$, so it suffices to show that $D$ is $[1,n)$-integrable. There is an automorphism
\[
\alpha  = 1+Dt^m+\alpha_{m+1}t^{m+1}+\cdots +\alpha_{mn}t^{mn}
\]
in $\Aut_m(A[t]/(t^{mn+1}))$, and applying \cite[Theorem 3]{G} to this yields an automorphism 
\[
\alpha'  = 1+Dt^m+\alpha_{2m}'t^{2m}+\cdots +\alpha_{mn}'t^{mn}
\]
involving only powers of $t^m$. Replacing $t^m$ with $t$ finishes the proof.
\end{proof}

At this point we can deduce that $\iDer(A)=\Der_{[1,\infty]}(A)$ forms a Lie algebra; this was originally proven by Gerstenhaber   \cite[Corollary 1]{G}, assuming  that $k=k_p$ for some prime $p$.

\begin{corollary}\label{cor_Lie}
If $A$ is an Artin algebra over a commutative Artinian ring $k$, then $\iDer(A)$ is a Lie subalgebra of $\Der(A)$, and if moreover $A$ contains a field of characteristic $p$, then $\iDer(A)$ is a restricted Lie subalgebra of $\Der(A)$.  
By the same token, $\HH^1_{\rm int}(A)$ is a (restricted) Lie subalgebra of $\HH^1(A)$.
\end{corollary}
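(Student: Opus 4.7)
The plan is to combine Remark \ref{rem_bracket_restr} with Theorem \ref{th_m_int}. The key observation is that the commutator computation
\[
\alpha \alpha'\alpha^{-1}\alpha'^{-1} = 1 + (\alpha_m\alpha'_{m'}-\alpha'_{m'}\alpha_{m})t^{m+m'}+\cdots
\]
and the identity $\alpha^p = 1 + \alpha_m^pt^{pm}+\cdots $ presented in Remark \ref{rem_bracket_restr} for automorphisms of truncated polynomial algebras both hold verbatim for automorphisms of $A\llbracket t\rrbracket$. Therefore, if $D$ is $[m,\infty]$-integrable via $\alpha\in \Aut_m(A\llbracket t\rrbracket)$ and $D'$ is $[m',\infty]$-integrable via $\alpha'\in \Aut_{m'}(A\llbracket t\rrbracket)$, then $\alpha \alpha'\alpha^{-1}\alpha'^{-1}\in \Aut_{m+m'}(A\llbracket t\rrbracket)$ witnesses $[D,D']$ as $[m+m',\infty]$-integrable, and in characteristic $p$, $\alpha^p\in \Aut_{pm}(A\llbracket t\rrbracket)$ witnesses $D^p$ as $[pm,\infty]$-integrable.

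With this in hand, I would take any $D,D'\in\iDer(A)=\Der_{[1,\infty]}(A)$. The paragraph above gives $[D,D']\in \Der_{[2,\infty]}(A)$. Applying Theorem \ref{th_m_int} (which requires $A$ to be an Artin algebra) collapses this back to $\Der_{[1,\infty]}(A) = \iDer(A)$, proving closure under the bracket. If $A$ contains a field of characteristic $p$, then similarly $D^p\in\Der_{[p,\infty]}(A)$, and Theorem \ref{th_m_int} again gives $D^p\in\iDer(A)$. Together with the additive closure already noted in the first remark after the definition, this shows $\iDer(A)$ is a (restricted) Lie subalgebra of $\Der(A)$.

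For the final assertion about $\HH^1_{\rm int}(A)$, I would recall from the second remark after the definition that every inner derivation $[a,-]$ is integrable, and note that the inner derivations form a (restricted) Lie ideal of $\Der(A)$: the bracket $[[a,-],D]=[Da - aD\text{-type expression},-]$ is inner, and $[a,-]^p$ is inner by standard restricted Lie identities (both facts hold in $\Der(A)$ and hence a fortiori in $\iDer(A)$). Therefore $\HH^1_{\rm int}(A)$ is the image of $\iDer(A)$ in the quotient $\HH^1(A) = \Der(A)/\{\text{inner derivations}\}$ and inherits the (restricted) Lie structure as a subobject of $\HH^1(A)$.

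There is no genuine obstacle here; once Theorem \ref{th_m_int} has been established, the corollary is essentially a formal bookkeeping consequence. The only point requiring a moment's care is checking that the bracket and $p$-th power identities of Remark \ref{rem_bracket_restr}, stated for truncated polynomial rings, transfer to the power series setting, but this is a direct calculation in $\Aut_m(A\llbracket t\rrbracket)$.
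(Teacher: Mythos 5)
Your proposal is correct and follows the same route as the paper, whose entire proof is the one-line citation of Remark \ref{rem_bracket_restr} and Theorem \ref{th_m_int}; you have simply filled in the details (including the observation that the commutator and $p$-th power identities transfer from $A[t]/(t^n)$ to $A\llbracket t\rrbracket$, which is indeed a direct calculation, or can alternatively be routed through Lemma \ref{lem_infty}). The closing discussion of inner derivations forming a (restricted) Lie ideal is the content the paper implicitly packages into ``by the same token,'' so nothing is missing.
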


\begin{proof}
This follows from Remark \ref{rem_bracket_restr} and Theorem \ref{th_m_int}.
\end{proof}

The class of integrable derivations is known to have good invariance properties, and we may use Corollary \ref{cor_Lie} to upgrade these invariance results to statements about Lie algebras. The next result builds on the work of Rouquier, Huisgen-Zimmermann, Saor\'in, Keller, and Linckelmann. When $k$ has characteristic zero all derivations are integrable, so that $\HH_{\rm int}^1(A)=\HH^1(A)$, and the theorem is well-known in this case.

\begin{theorem}
\label{invarianceint}
Let $A$ and $B$ two finite dimensional split algebras over a field $k$. Assume either that $A$ and $B$ are derived equivalent, or that $A$ and $B$ are self-injective and stably equivalent of Morita type. Then $\HH_{\rm int}^1(A)\cong \HH_{\rm int}^1(B)$ as Lie algebras, and this is an isomorphism of restricted Lie algebras if $k$ is of positive characteristic.
\end{theorem}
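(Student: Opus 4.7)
The plan is to combine three ingredients. Denote by $\phi\colon \HH^1(A)\to\HH^1(B)$ the isomorphism induced by the chosen two-sided tilting complex (in the derived case) or stable Morita bimodule (in the stable self-injective case). I proceed in three steps.

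First, I invoke the known invariance of $\HH^1$ as a Lie algebra under these equivalences. For derived equivalences this is the work of Rouquier, Keller, and Huisgen-Zimmermann--Saor\'in, and for stable equivalences of Morita type between self-injective algebras it is due to Linckelmann. The self-injective hypothesis is essential here so that $\phi$ is defined on $\HH^1$ itself rather than only on its stable analogue. In positive characteristic the same isomorphism respects the $p$-power operation, making it an isomorphism of restricted Lie algebras; this follows from the naturality of the $p$th power of a derivation, which is built into the functorial constructions at play.

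Second, I invoke the known invariance of the subspace $\HH^1_{\rm int}$: by Farkas-Geiss-Marcos \cite{FGM} in the derived case and by Linckelmann \cite{ML} in the stable self-injective case, the map $\phi$ (in each setting, the natural one constructed from the chosen tilting complex or bimodule) restricts to a vector space isomorphism $\HH^1_{\rm int}(A)\tosim\HH^1_{\rm int}(B)$.

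Third, Corollary \ref{cor_Lie} tells us that $\HH^1_{\rm int}(A)$ and $\HH^1_{\rm int}(B)$ sit inside their ambient Hochschild cohomologies as (restricted) Lie subalgebras. Combining the three facts, the restriction of $\phi$ to $\HH^1_{\rm int}(A)$ is a (restricted) Lie algebra isomorphism onto $\HH^1_{\rm int}(B)$, which proves the theorem. The main point in execution is ensuring that the isomorphism used by Farkas-Geiss-Marcos and by Linckelmann to establish vector space invariance of $\HH^1_{\rm int}$ is the same $\phi$ appearing in the Lie algebra invariance results; in both settings both maps arise from the same underlying (derived or stable) functor, so this is a matter of tracing through the definitions rather than a genuine obstacle. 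In the derived case one may further reduce to a Morita situation over a tensor algebra of endomorphism rings, where the compatibility is most transparent.
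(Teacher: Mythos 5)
There is a genuine gap in the derived-equivalence case of your second step. You cite Farkas--Geiss--Marcos for the statement that the natural map $\phi$ restricts to a vector space isomorphism $\HH^1_{\rm int}(A)\tosim\HH^1_{\rm int}(B)$ under a derived equivalence, but \cite{FGM} only establishes invariance of $\HH^1_{\rm int}$ under \emph{Morita} equivalences. The derived invariance of the subspace $\HH^1_{\rm int}$ is precisely the new content of this theorem and cannot be quoted from the literature; your closing suggestion that ``one may further reduce to a Morita situation'' does not work, since a general derived equivalence is not induced by a Morita equivalence. The paper has to supply an actual argument here: it uses the fact that the identity component $\mathrm{Out}(A)^\circ$ of the outer automorphism group scheme is a derived invariant (Huisgen-Zimmermann--Saor\'in, Rouquier), deduces that $\mathrm{Out}_1(A\llbracket t\rrbracket)^\circ$ is as well, and then observes that the map $\pi\colon \mathrm{Out}_1(A\llbracket t\rrbracket)^\circ \to \HH^1(A)$, $\alpha\mapsto\alpha_1$, is compatible with the derived equivalence because both source and target isomorphisms are realised by tensoring with the same complex of bimodules; the image of $\pi$ is $\HH^1_{\rm int}(A)$, so that image is a derived invariant. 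Without some such argument your proof establishes nothing beyond what was already known.

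Your remaining structure is sound and matches the paper where it applies: in the stable self-injective case the citation of Linckelmann's transfer-map isomorphism \cite{ML} is exactly what the paper uses, and your third step (invoking Corollary \ref{cor_Lie} to see that the integrable classes form a restricted Lie subalgebra, so that a bracket- and $p$-power-preserving isomorphism of the ambient $\HH^1$ restricts to one of the subalgebras) is the same mechanism the paper uses, together with the compatibility results of Keller and of \cite{BR} for the bracket and the $p$-power operation. You should also be slightly more careful in the first step: the compatibility of $\phi$ with the $p$-power operation is not ``built into the functorial constructions'' for free---the paper cites specific results (\cite[(3.2)]{KellerI} with \cite[Theorem 2]{BR} in the derived case, and \cite[Theorem 1.1]{R} with \cite[Theorem 1]{BR} in the stable case) to justify it.
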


\begin{proof}
Assume first that $A$ and $B$ are derived equivalent. The identity component $\mathrm{Out}(A)^\circ$ of the group scheme of outer automorphism group of $A$ is a derived invariant by \cite{HZS,Ruq}. Therefore the set 
\[
\mathrm{Out}_1(A\llbracket t\rrbracket)^\circ=\{ f\in \Hom_{k\text{-scheme}}({\rm Spec}( k\llbracket t\rrbracket),\mathrm{Out}(A)^\circ) ~:~ f(0)=1\}
\]
is a derived invariant as well, and we have  $\mathrm{Out}_1(A\llbracket t\rrbracket)^\circ\cong \mathrm{Out}_1(B\llbracket t\rrbracket)^\circ$. 

Since $\HH^1(A)\cong \Ext^1_{A^{\rm op}\otimes A}(A,A)$ the first Hochschild cohomology is also a derived invariant, and we have $\HH^1(A)\cong \HH^1(B)$.

Both isomorphisms above are realised by tensoring with a complex of $A$-$B$ bimodules \cite{Rick}, and it follows that the \emph{map} below is a derived invariant:
\[
\pi\colon \mathrm{Out}_1(A\llbracket t\rrbracket)^\circ \longrightarrow \HH^1(A) \quad \alpha = 1 + \alpha_1t+\alpha_2t^2+\cdots \mapsto \alpha_1.
\]
Therefore the image $\HH^1_{\rm int}(A)={\rm im}(\pi)$ is a derived invariant as well (c.f.~the proof of \cite[Theorem 5.1]{ML} for a similar argument). The fact that the isomorphism $\HH^1_{\rm int}(A)\cong \HH^1_{\rm int}(B)$ is one of Lie algebras now follows from Corollary \ref{cor_Lie} and \cite[Section 4]{KellerIII}. In positive characteristic this respects the $p$-power structure by \cite[(3.2)]{KellerI} combined with \cite[Theorem 2]{BR}.

For the case of self-injective algebras that are stably equivalent of Morita type, there is by \cite[Theorem 5.1]{ML} an isomorphism $\HH_{\rm int}^1(A)\cong \HH_{\rm int}^1(B)$ induced by a transfer map,   
and this is an isomorphism of restricted Lie algebras by Corollary \ref{cor_Lie} together with \cite[Theorem 1.1]{R} and \cite[Theorem 1]{BR}.
\end{proof}

\subsection{Obstructions to integrability}\label{sub_obs} The results of \cite{G} are proven using an obstruction theory for integrability, which we explain now.

For any automorphism $\alpha = 1+\alpha_1t^1+\cdots \alpha_{n-1}t^{n-1}$ in $\Aut_1(A[t]/(t^n))$ we define a $k$-linear map $\obs(\alpha)\colon A\otimes A\to A$ by the rule
\[
\widetilde{\obs}(\alpha)(x\otimes y)=\alpha_1(x)\alpha_{n-1}(y)+\dots + \alpha_{n-1}(x)\alpha(y).
\]
Viewed as a degree $2$ element in the Hochschild cochain complex $C^*(A)$, one checks that $\widetilde{\obs}(\alpha)$ is a cycle, and therefore defines a cohomology class
\[
\obs(\alpha)=[\widetilde{\obs}(\alpha)] \quad\text{in}\quad \HH^2(A).
\]
In order to extend $\alpha$ to an element of $\Aut_1(A[t]/(t^{n+1}))$, we must find a $k$-linear endomorphism $\alpha_n\colon A\to A$ satisfying the Hasse-Schmidt identity
\[
\alpha_n(xy)=x\alpha_n(y) + \alpha_1(x)\alpha_{n-1}(y)+\dots + \alpha_n(x)y. 
\]
This may be rearranged and formulated using the Hochschild cochain complex:
\[
\partial(\alpha_n) = \widetilde{\obs}(\alpha)\quad\text{in}\quad C^2(A).
\]
We obtain the statement of \cite[Proposition 5]{G}: an automorphism $\alpha\in \Aut_1(A[t]/(t^{n})) $ can be extended to $\Aut_1(A[t]/(t^{n+1}))$ if and only if $\obs(\alpha)=0$.

For any $[m,n)$-integrable derivation $D$, there is by definition an  automorphism $\alpha = 1+\alpha_mt^m+\cdots \alpha_{n-1}t^{n-1}$ in $\Aut_m(A[t]/(t^n))$ with $\alpha_m=D$. The above result shows that if $\obs(\alpha)=0$, then $D$ is $[m,n+1)$-integrable. We caution that $D$ being  $[m,n+1)$-integrable does not necessarily imply that $\obs(\alpha)=0$, only that \emph{some} automorphism in $\Aut_m(A[t]/(t^n))$ extending $1+Dt^m$ is unobstructed.

The \emph{obstruction order} of an automorphism $\alpha$ in $ \Aut_1(A[t]/(t^i))$ is $n$ if it admits an extension to an automorphism in $\Aut_1(A[t]/(t^n))$, but admits no extension to an automorphism in $\Aut_1(A[t]/(t^{n+1}))$. 

Gerstenhaber proves the following two facts about the above obstruction theory.

\begin{theorem}[{\cite[Theorem 1, Theorem 2, and Corollary 1]{G}}]\label{th_obs}
Let $A$ be an algebra over a commutative ring $k$. 
\begin{enumerate}
    \item\label{th_obs_1} If $\alpha,\alpha'\in \Aut_1(A[t]/(t^n))$ then  $\obs(\alpha\alpha')=\obs(\alpha)+\obs(\alpha')$ in $\HH^2(A)$.

    \item\label{th_obs_2} Assume $k=k_p$ for some prime $p$. If $D$ is a derivation then the obstruction order of  $1+Dt$ is (if finite) of the form $p^e$. Moreover, in this case the obstruction order of $1+Dt^m$ is $mp^e$.
\end{enumerate}
\end{theorem}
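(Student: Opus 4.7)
The plan splits along items (\ref{th_obs_1}) and (\ref{th_obs_2}). For (\ref{th_obs_1}) I would carry out a direct computation in the Hochschild cochain complex. Writing $\alpha\alpha' = 1 + \gamma_1 t + \cdots + \gamma_{n-1}t^{n-1}$ with $\gamma_k = \sum_{i+j=k}\alpha_i \circ \alpha'_j$ (setting $\alpha_0 = \alpha'_0 = \mathrm{id}$), one expands
\[
\widetilde{\obs}(\alpha\alpha')(x,y) = \sum_{k=1}^{n-1}\gamma_k(x)\,\gamma_{n-k}(y)
\]
and separates the result into a ``pure $\alpha$'' diagonal (all $\alpha'$-indices zero, summing to $\widetilde{\obs}(\alpha)$), a ``pure $\alpha'$'' diagonal (summing to $\widetilde{\obs}(\alpha')$), and the remaining cross terms. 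The claim is that the cross terms coincide with $-\partial \xi$, where $\xi = \sum_{i+j=n,\, i,j\geq 1}\alpha_i\circ\alpha'_j$ is viewed as a $1$-cochain; the verification amounts to expanding $\xi(xy)$ via the Hasse--Schmidt identities for both $\alpha$ and $\alpha'$ and matching the resulting four-index sums term by term.

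For the second sentence of (\ref{th_obs_2})---that the obstruction order of $1+Dt^m$ is $mp^e$ once the first sentence is known---I would reduce to Gerstenhaber's monomial-form theorem, which is already invoked in the proof of Theorem \ref{th_m_int}. Any extension of $1+Dt$ to an element of $\Aut_1(A[t]/(t^{p^e}))$ yields, via $t \mapsto t^m$, an extension of $1+Dt^m$ in $\Aut_m(A[t]/(t^{mp^e}))$, giving the lower bound $mp^e$. Conversely, any extension of $1+Dt^m$ to $\Aut_m(A[t]/(t^{mp^e+1}))$ may, by the monomial-form theorem, be replaced by one involving only powers of $t^m$; the inverse substitution $t^m \mapsto s$ then produces an extension of $1+Ds$ to $\Aut_1(A[s]/(s^{p^e+1}))$, contradicting the first sentence.

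The first sentence of (\ref{th_obs_2}) is the main obstacle, and here I would follow Gerstenhaber's $p$-local approach: by Lemma \ref{local_lem} we may assume $k = k_p$. Suppose for contradiction that the obstruction order $n = p^e m$ with $m > 1$ and $\gcd(m,p)=1$; the goal is to construct an extension of $1+Dt$ to level $n+1$. The main ingredients are part (\ref{th_obs_1}), which makes $\obs$ a group homomorphism $\Aut_1(A[t]/(t^n)) \to \HH^2(A)$, and the $p$-local divisibility afforded by working over $\mathbb{Z}_p$ (integers coprime to $p$ become units). The hard part---where I would concentrate effort---is combining these two tools, through some mixture of extracting $\ell$-th roots for primes $\ell \neq p$, adjusting by elements of $\Aut_2(A[t]/(t^n))$, and tracking the resulting obstruction via part (\ref{th_obs_1}), in order to engineer a modified extension whose obstruction class vanishes. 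This is the step that genuinely requires the $\mathbb{Z}_p$-hypothesis, and I expect the careful obstruction-theoretic bookkeeping here to constitute the bulk of the argument.
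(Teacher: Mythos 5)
The paper does not actually prove this statement: it is imported wholesale from Gerstenhaber's paper \cite{G} (Theorem 1, Theorem 2 and Corollary 1 there), so there is no in-paper argument to compare yours against. Judged on its own terms, your treatment of part (\ref{th_obs_1}) is correct and complete: writing $\xi=\sum_{i+j=n,\,i,j\geq 1}\alpha_i\circ\alpha'_j$, an inclusion--exclusion over the index set $\{(i,j,i',j'):i+j+i'+j'=n\}$ of the terms $\alpha_i\alpha'_j(x)\,\alpha_{i'}\alpha'_{j'}(y)$ confirms that $\widetilde{\obs}(\alpha\alpha')-\widetilde{\obs}(\alpha)-\widetilde{\obs}(\alpha')=-\partial\xi$, exactly as you claim. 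Your reduction of the second sentence of (\ref{th_obs_2}) to the first --- the substitution $t\mapsto t^m$ for the lower bound $mp^e$, and the monomial-form theorem \cite[Theorem 3]{G} followed by the inverse substitution for the upper bound --- is also sound, and consistent with how the paper itself deploys \cite[Theorem 3]{G} in Theorem \ref{th_m_int}; it does, of course, import that theorem as a black box, but so does the paper.

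The genuine gap is the first sentence of (\ref{th_obs_2}), which you acknowledge but do not supply: that the obstruction order of $1+Dt$ is a power of $p$. This is the substantive content of the theorem, and the sketch you give (extracting $\ell$-th roots for $\ell\neq p$, adjusting by $\Aut_2$, tracking via additivity) does not yet contain the idea that makes it work. In particular, since $\obs$ is additive by part (\ref{th_obs_1}), an $\ell$-th root of an automorphism would merely divide its obstruction class by $\ell$, which is already invertible in $k=k_p$ and so forces nothing to vanish. The structural input you would need is the observation, itself a consequence of part (\ref{th_obs_1}), that the obstruction classes of \emph{all} extensions of $1+Dt$ to $\Aut_1(A[t]/(t^n))$ form a single coset of the subgroup $O_n=\obs\big(\Aut_2(A[t]/(t^n))\big)$ of $\HH^2(A)$, so that $D$ extends one step further if and only if this coset is $O_n$ itself; one then compares two different extensions of $1+cDt$ for integers $c$ coprime to $p$ --- the power $\alpha^c$, with obstruction $c\,\obs(\alpha)$, and the rescaling $t\mapsto ct$, with obstruction $c^n\obs(\alpha)$ --- to place $(c^n-c)\obs(\alpha)$ in $O_n$ and extract arithmetic consequences for $n$. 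Turning this into the statement that $n$ must be a power of $p$ is precisely the bookkeeping you defer, so as written the proposal does not establish part (\ref{th_obs_2}).
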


Because of (\ref{th_obs_2}), a derivation $D$ is said to have \emph{obstruction exponent} $e$ if $1+Dt$ has obstruction order $p^e$.

To give an example of how the obstruction theory is used we use it to give Gerstenhaber's beautiful proof of the analogue of Corollary \ref{cor_Lie} for finitely integrable derivations (which are especially connected with jet spaces).

\begin{corollary}[\cite{G}]
If $A$ is a Noether algebra over a commutative Noetherian ring $k$,
 then $\Der_{[1,n)}(A)$ is a Lie subalgebra of $\Der(A)$, and if moreover $A$ contains a field of characteristic $p$, then $\iDer(A)$ is a restricted Lie subalgebra of $\Der(A)$.
\end{corollary}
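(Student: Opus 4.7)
The plan is to mirror the proof of Corollary \ref{cor_Lie}, using the obstruction-theoretic Theorem \ref{th_obs} in place of the Artin-specific Theorem \ref{th_m_int}. First I apply Lemma \ref{local_lem} to reduce to the case that $k$ is a $\mathbb{Z}_p$-algebra, so that Theorem \ref{th_obs}(2) is available.

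Given $D, D' \in \Der_{[1,n)}(A)$ with witnesses $\alpha, \alpha' \in \Aut_1(A[t]/(t^n))$, the commutator $\alpha\alpha'\alpha^{-1}\alpha'^{-1}$ computed in Remark \ref{rem_bracket_restr} lies in $\Aut_2(A[t]/(t^n))$ and has $t^2$-coefficient $[D,D']$, placing $[D,D']$ in $\Der_{[2,n)}(A)$. In positive characteristic, the same remark gives $\alpha^p \in \Aut_p(A[t]/(t^n))$ with $t^p$-coefficient $D^p$, so $D^p \in \Der_{[p,n)}(A)$. What remains is to promote $[m,n)$-integrability back to $[1,n)$-integrability, for $m \in \{2,p\}$.

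For this promotion I would invoke Theorem \ref{th_obs}(2): when finite, the obstruction order of $1+Dt^m$ equals $m$ times the obstruction order of $1+Dt$, and both are powers of $p$. A direct consequence is that the filtration $\Der_{[1,n)}(A)$ depends on $n$ only through the smallest power $n^\sharp$ of $p$ with $n^\sharp \geq n$, and more generally $[m,N)$-integrability becomes $[1,N')$-integrability after dividing $N$ by $m$ and rounding the result up to a power of $p$. Applying this conversion to $[D,D']$ and $D^p$ places them back in $\Der_{[1,n^\sharp)}(A) = \Der_{[1,n)}(A)$ whenever the rounding cooperates. For the parallel claim about $\iDer(A)$, the infinite-order case of Theorem \ref{th_obs}(2)---infinite obstruction order at level $m$ forces infinite obstruction order at level $1$---places $[D,D']$ and $D^p$ in $\Der_{[1,\infty)}(A)$, and a stabilisation argument in the spirit of Lemma \ref{lem_infty} then upgrades this to $[1,\infty]$-integrability.

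The main obstacle will be the arithmetic with powers of $p$ in the upgrade step, particularly for the $p$-power operation and for the bracket in characteristic $2$, where the naive rounding may lose a factor of $p$ and a more delicate bootstrapping is needed. A secondary obstacle is extracting a compatible system of extensions across all orders to conclude $[1,\infty]$-integrability in the $\iDer(A)$ statement, since the finite-length argument underlying Lemma \ref{lem_infty} is not directly available in the Noether setting.
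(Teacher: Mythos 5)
Your skeleton matches the paper's: localise via Lemma \ref{local_lem}, reduce to $n=p^e$, produce the witnesses $\alpha\alpha'\alpha^{-1}\alpha'^{-1}$ and $\alpha^p$ as in Remark \ref{rem_bracket_restr}, and then promote $[m,n)$-integrability back to $[1,n)$-integrability. But the promotion step, which you correctly flag as the "main obstacle," is exactly where the proof lives, and your proposal does not contain the idea that resolves it. Quantifying your own worry: if $E=[D,D']$ is only known to be $[2,p^e)$-integrable, then Theorem \ref{th_obs}(\ref{th_obs_2}) says the obstruction order of $1+Et^2$ is $2p^f$ with $2p^f\geq p^e$; for odd $p$ this forces $p^f\geq p^e$ and your rounding succeeds, but for $p=2$ it allows $f=e-1$, and for the $p$-power operation ($D^p$ being $[p,p^e)$-integrable, obstruction order $p^{f+1}\geq p^e$) it allows $f=e-1$ in every characteristic. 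So the naive argument provably yields only $[1,p^{e-1})$-integrability in precisely the cases you single out, and no amount of rounding fixes this.

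The missing ingredient is Theorem \ref{th_obs}(\ref{th_obs_1}), the additivity of the obstruction class, which you never invoke. Since $\obs(\alpha\alpha')=\obs(\alpha)+\obs(\alpha')$, the commutator automorphism has $\obs(\alpha\alpha'\alpha^{-1}\alpha'^{-1})=0$ and the $p$-th power has $\obs(\alpha^p)=p\,\obs(\alpha)=0$; hence each witness extends \emph{one more order}, giving $[m,p^e+1)$-integrability. Now the quantization in part (\ref{th_obs_2}) bites: the obstruction order $mp^f$ satisfies $mp^f\geq p^e+1>mp^{e-1}$, so $f\geq e$ and the derivation is $[m,mp^e)$-integrable; Gerstenhaber's substitution result \cite[Theorem 3]{G} (replace $t^m$ by $t$) then lands it in $\Der_{[1,p^e)}(A)$. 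This single extra order is the whole point of the additivity statement, and without it the argument fails for the restricted structure and in characteristic $2$. As a secondary remark, your concerns about upgrading to $[1,\infty]$-integrability are tangential here: the paper's proof of this corollary treats the second clause by the same finite-order argument (via $\obs(\alpha^p)=0$), and the passage to $[1,\infty]$ is only carried out under the Artin hypothesis in Lemma \ref{lem_infty} and Corollary \ref{cor_Lie}.
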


\begin{proof}
By Lemma \ref{local_lem} we may assume that $k=k_p$, so that Theorem \ref{th_obs} part (\ref{th_obs_2}) applies.

Taking $D,D'\in \Der_{[1,n)}(A)$, we may assume by Theorem \ref{th_obs} part (\ref{th_obs_2}) that $n=p^e$ is a power of $p$. Suppose that $D=\alpha_1$ and $D'=\alpha_1'$ for two $\alpha,\alpha'\in \Aut_1(A[t]/(t^{p^e}))$. As in Remark \ref{rem_bracket_restr} the automorphism $\alpha \alpha'\alpha^{-1}\alpha'^{-1} $ shows that  $[D,D']$ is $[2,p^e)$-integrable. However, by Theorem \ref{th_obs} part (\ref{th_obs_1}) we have $\obs(\alpha'\alpha^{-1}\alpha'^{-1})=0$, therefore $[D,D']$ is in fact $[2,p^e+1)$-integrable. By Theorem \ref{th_obs} part (\ref{th_obs_2}) it must therefore be $[2,2p^e)$-integrable, as $2p^{e-1}<p^e+1$. By \cite[Theorem 3]{G} this implies that $[D,D']$ is $[1,p^e)$-integrable.

A similar argument yields the second claim, since $\obs(\alpha^p)=p\obs(\alpha)=0$ using Theorem \ref{th_obs} part (\ref{th_obs_1}).
\end{proof}

\begin{remark}
Linckelmann considers a more general notion of integrable derivation in \cite{ML}, replacing $k\llbracket t \rrbracket$ with any discrete valuation ring. It would be interesting to develop the obstruction theory using this definition, and to see whether the results above extend to this context as well.
\end{remark}

\section{Counterexamples to solvability}
\label{cexamplesolv}

\subsubsection*{Notation} For the next three sections we denote by $S_n$ the the symmetric group on $n$ letters, by $A_n$ the alternating group and by $C_n$ the cyclic group of order $n$. When dealing with groups we use the superscript notation for the $n$-fold direct product and we denote by $H\rtimes N$ the semidirect product of $N$ and $H$. We denote by $k$ a field and by $k^{+}$ its additive group.

In this section we give a counter example concerning the solvability of integrable derivations.

\begin{question} [{\cite[Question 8.2]{L}}]
When is the Lie algebra $\HH_{\rm int}^1(A)$ solvable? 
\end{question}
In the same article Linckelmann suggests that, based on the examples,
$\HH^1_{\rm int}(A)$ should be a solvable Lie algebra if $A$ is a block of a
finite group algebra $kG$ over an algebraically closed field of prime characteristic. We provide a negative answer to this suggestion by considering the group algebra $kP$ of an elementary
abelian $p$-group $P$ of rank $2$, that is, $kP=k(C_p\times C_p)$ where $C_p$ is a cyclic group of order $p$. In this case the group algebra $kP$ coincides with its unique block.

\begin{theorem}
\label{notsolvable}
Let $k$ be a field of characteristic $p\geq 3$. Let $A=k(C_p\times C_p)$.
Then $\HH^1_{int}(A)$ is not solvable. 
\end{theorem}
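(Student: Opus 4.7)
The plan is to produce an explicit copy of $\sl_2(k)$ inside $\HH^1_{\rm int}(A)$, which rules out solvability whenever $p\geq 3$.

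First I identify $A=k(C_p\times C_p)$ with the truncated polynomial algebra $k[u,v]/(u^p,v^p)$ via $u=x-1$, $v=y-1$ (where $x,y$ generate the two cyclic factors). Since $A$ is commutative, all inner derivations vanish and $\HH^1(A)=\Der(A)$; this is a free $A$-module on $\partial_u,\partial_v$, so in particular the three derivations $E=u\partial_v$, $F=v\partial_u$ and $H=u\partial_u-v\partial_v$ are $k$-linearly independent.

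The key step is to exhibit integrations of $E$ and $F$ to automorphisms of $A\llbracket t\rrbracket$. Define the $k\llbracket t\rrbracket$-algebra endomorphism $\phi\colon A\llbracket t\rrbracket\to A\llbracket t\rrbracket$ by $u\mapsto u$ and $v\mapsto v+tu$. The only non-trivial check that $\phi$ is well-defined is that $(v+tu)^p=0$ in $A\llbracket t\rrbracket$, which holds by the Frobenius identity in characteristic $p$: $(v+tu)^p=v^p+t^p u^p=0$. Thus $\phi\in\Aut_1(A\llbracket t\rrbracket)$, and expanding $\phi=1+\phi_1 t+\phi_2 t^2+\cdots$ one reads off $\phi_1=E$, so $E\in\iDer(A)$. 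A symmetric argument using $u\mapsto u+tv$, $v\mapsto v$ gives $F\in\iDer(A)$.

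By Corollary \ref{cor_Lie}, $\HH^1_{\rm int}(A)$ is a Lie subalgebra of $\HH^1(A)$ and so contains $[E,F]$. A short direct computation yields $[E,F]=H$, $[H,E]=2E$ and $[H,F]=-2F$, so $E,F,H$ span an $\sl_2$-triple inside $\HH^1_{\rm int}(A)$. Since $p\geq 3$, this copy of $\sl_2(k)$ is simple, in particular not solvable, which forces $\HH^1_{\rm int}(A)$ to be non-solvable.

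The main obstacle is identifying the right pair of derivations: integrability of $E=u\partial_v$ \emph{to all orders} of $t$ uses the Frobenius identity in an essential way, in that it lets the naive ``linear'' candidate $v\mapsto v+tu$ work without requiring corrections at higher orders. Once $E$ and $F$ are in hand, the $\sl_2$ structure follows automatically from the Lie algebra closure of integrable derivations established in Corollary \ref{cor_Lie}.
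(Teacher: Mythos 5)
Your proposal is correct and follows essentially the same strategy as the paper: both identify $A$ with the truncated polynomial algebra $k[u,v]/(u^p,v^p)$ and exhibit the $\sl_2$-triple $u\partial_v$, $v\partial_u$, $u\partial_u-v\partial_v$ inside $\HH^1_{\rm int}(A)$, using Corollary \ref{cor_Lie} to close under the bracket. Your direct integration of $u\partial_v$ and $v\partial_u$ via the Frobenius identity is a slightly more streamlined route than the paper's, which first integrates $u\partial_u$ and $v\partial_v$ and then appeals to the ${\rm Z}(A)$-module structure of $\iDer(A)$ to account for the remaining basis elements.
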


\begin{proof}
Note that $A=k(C_p\times C_p)\cong k[x,y]/(x^p, y^p)$. Then  $\HH^1(A)$ is a Jacobson-Witt algebra \cite{J} and has a $k$-basis given by
the  derivations 
\[
\{f_{a,b}\ |\ 0 \leq a,b\leq p-1\}\cup \{g_{c,d}\ |\  0 \leq c,d\leq p-1\}
\] 
where $f_{a,b}(x)=x^ay^b$, $f_{a,b}(y)=0$ and $g_{c,d}(y)=x^cy^d$, $g_{c,d}(x)=0$. Note that $\HH^1_{\rm int}(A)$ has the same $k$-basis of $\HH^1(A)$ excluding  the derivations $f_{0,0}$ and $g_{0,0}$.
In order to prove that $f_{1,0}$ is an integrable derivation we construct the automorphism $\alpha=1+ f_{1,0}t$ $\in \mathrm{Aut}(A[[t]])$. It is easy to check that it preserves the relations. Using the same argument we can prove that $g_{0,1}$ is integrable as well. Then we use the fact that the space  of integrable derivations form a ${\rm Z}(A)$-module. Since $kP$ is a commutative algebra, we obtain that the rest of the derivations in the basis, aside from $f_{0,0}$ and $g_{0,0}$, are integrable. The derivations $f_{0,0}$ and $g_{0,0}$ do not preserve the Jacobson radical, hence they are not integrable (see Corollary 2.1 in \cite{FGM}).

Let $\f,\e,\h$ be  a basis of 
$\sl_2(k)$ satisfying
$[\e,\f] = \h$, $[\h,\f] = -2\f$, and $[\h,\e] = 2\e$. The derived subalgebra of $\HH^1_{\rm int}(A)$ 
contains
the Lie algebra $\sl_2(k)$ 
via the map sending $\f$ to 
$f_{0,1}(=[f_{0,1},f_{1,0}])$, $\h$ to $f_{1,0}-g_{0,1}(=[g_{1,0},f_{0,1}])$, and $\e$ to
$g_{1,0}(=[g_{1,0},g_{0,1}])$. The statement follows.
\end{proof}

By the same argument one shows:

\begin{corollary}
Let $k$ be a field of characteristic $p$. Let $P$ be an elementary abelian $p$-group of rank greater than $2$.  
Then $\HH^1_{\rm int}(kP)$ is not solvable. 
\end{corollary}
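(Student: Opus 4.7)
The plan is to imitate the proof of Theorem \ref{notsolvable} essentially verbatim, by restricting attention to just two of the generators. Let $n=\operatorname{rank}(P)\geq 3$, so $kP\cong k[x_1,\ldots,x_n]/(x_1^p,\ldots,x_n^p)$. As in the rank-$2$ case, $\HH^1(kP)$ is a Jacobson--Witt algebra, with a $k$-basis of derivations $f^{(i)}_{\vec a}$ sending $x_i\mapsto x_1^{a_1}\cdots x_n^{a_n}$ and $x_j\mapsto 0$ for $j\neq i$, where $1\leq i\leq n$ and $0\leq a_j\leq p-1$.

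First I would exhibit four integrable derivations
\[
h_1 = f^{(1)}_{(1,0,\ldots,0)}, \quad h_2 = f^{(2)}_{(0,1,0,\ldots,0)}, \quad \f = f^{(1)}_{(0,1,0,\ldots,0)}, \quad \e = f^{(2)}_{(1,0,\ldots,0)},
\]
by writing down in each case an automorphism of $kP\llbracket t\rrbracket$ that fixes $x_j$ for $j\geq 3$ and acts on $x_1, x_2$ exactly as in the rank-$2$ proof: namely $x_1\mapsto x_1(1+t)$ for $h_1$, $x_2\mapsto x_2(1+t)$ for $h_2$, $x_1\mapsto x_1+tx_2$ for $\f$, and $x_2\mapsto x_2+tx_1$ for $\e$. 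In each case the relation $x_i^p=0$ is preserved because raising to the $p$-th power annihilates all terms involving $t$, so these formulae do give elements of $\Aut_1(kP\llbracket t\rrbracket)$.

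Setting $\h = h_1 - h_2$ and computing commutators exactly as in the proof of Theorem \ref{notsolvable}, one verifies that $\f,\e,\h$ satisfy $[\e,\f]=\h$, $[\h,\f]=-2\f$, and $[\h,\e]=2\e$, so they span a copy of $\sl_2(k)$ inside the derived subalgebra of $\HH^1_{\rm int}(kP)$. Since $\sl_2(k)$ is not solvable (for $p\geq 3$, in line with the hypothesis of Theorem \ref{notsolvable}), neither is $\HH^1_{\rm int}(kP)$. There is essentially no obstacle to the argument: the only point worth emphasising is that the rank-$2$ integrations extend to $kP$ by the identity on the extra variables because the defining relations of $kP$ decouple into one generator at a time.
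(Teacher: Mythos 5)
Your proposal is correct and is exactly the argument the paper intends: the paper's proof of this corollary is literally ``by the same argument'' as Theorem \ref{notsolvable}, and you carry that out by embedding the rank-$2$ picture in the first two variables, extending the integrating automorphisms by the identity on $x_3,\ldots,x_n$ (which works since the relations $x_i^p=0$ decouple), and exhibiting the same copy of $\sl_2(k)$. Your parenthetical caveat that the $\sl_2$ argument needs $p\geq 3$ is a fair observation, since the corollary as stated only assumes characteristic $p$ while inheriting its proof from a theorem stated for $p\geq 3$.
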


\section{The first Hochschild cohomology of the symmetric group}
\label{HH1symgroup}
In this section we give a formula for the dimension of $\HH^1(kS_n)$.  We start by recalling  some basic facts on the representation theory of the symmetric group.  

\begin{definition}
A \emph{partition} of a 
nonnegative integer $n$ is a 
decreasing sequence of positive integers $\lambda_1 > \lambda_2 
> \dots > \lambda_s 
>0$ and positive integers $e_1,\dots, e_s$ such that  such that 
$e_1\lambda_1+\dots +e_s\lambda_s=n$. We use the notation  $\lambda = (\lambda_1^{e_1},\dots,\lambda_s^{e_s})$, and we say that $\lambda_i$ the \emph{$i$th part} of 
$\lambda$, and $e_i$ is the \emph{multiplicity} of $\lambda_i$. We denote by 
$\mathcal{P}(n)$ the set of all partitions of $n$.

\end{definition}

We recall that the conjugacy classes of $S_n$ are in bijection with the 
partitions of $n$, with the conjugacy class of an element $x$ corresponding to the partition $\lambda$ determined by the cycle type of $x$. That is, if $x=c_{1,1} \dots c_{1,e_1}\dots c_{s,1} \dots c_{s,e_s}$ in disjoint cycle notation (including cycles of length one), with $c_{i,j}$ a cycle of length $\lambda_i$, then  $\lambda = (\lambda_1^{e_1},\dots,\lambda_s^{e_s})$.

We begin by computing $\HH^1(kS_n)$ using the centraliser decomposition of Hochschild cohomology.

\begin{theorem}
\label{dim}
Let $k$ a field of characteristic $p$ and let $S_n$ the symmetric group on $n$ 
letters. Then we have the following
decomposition:  
\[
\HH^1(kS_n)=\bigoplus_{\lambda\in \mathcal{P}(n)}\Hom \Big (\prod_{i=1}^{s}
(C_{\lambda_i}\times (S_{e_i}/A_{e_i})),k^{+} \Big ).
\]
\end{theorem}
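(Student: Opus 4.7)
The plan is to invoke the classical centraliser decomposition of the Hochschild cohomology of a group algebra,
$$\HH^n(kG) \cong \bigoplus_{[g]} H^n(C_G(g), k),$$
where the sum runs over conjugacy class representatives of $G$ and $C_G(g)$ is the centraliser. Specialising to $n=1$, and using $H^1(H, k) = \Hom(H, k^+)$ for any finite group $H$, this reduces the calculation to computing $\Hom(C_{S_n}(x), k^+)$ as $x$ ranges over conjugacy class representatives of $S_n$.

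Next I would invoke the standard description of centralisers in symmetric groups: for a partition $\lambda = (\lambda_1^{e_1}, \ldots, \lambda_s^{e_s})$, an element of cycle type $\lambda$ has centraliser
$$C_{S_n}(x) \cong \prod_{i=1}^{s} (C_{\lambda_i} \wr S_{e_i}),$$
where $C_{\lambda_i}$ is generated by one of the $\lambda_i$-cycles and $S_{e_i}$ permutes the $e_i$ cycles of length $\lambda_i$.

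The core computation is then to identify the abelianisation of a wreath product $C_m \wr S_e = C_m^e \rtimes S_e$, since $\Hom(-, k^+)$ factors through abelianisation and sends direct products to direct sums. I would verify that the map $(a_1, \ldots, a_e; \sigma) \mapsto \bigl(\sum_i a_i,\, \sigma A_e\bigr)$ realises the abelianisation as $C_m \times (S_e/A_e)$: it is manifestly surjective and kills commutators, and conversely one checks that its kernel lies in the commutator subgroup, since differences $(a_1, \ldots, a_e; 1)$ with $\sum_i a_i = 0$ arise as commutators of transposition lifts with diagonal elements, and $A_e$ is the commutator subgroup of $S_e$. A small amount of care is required in the edge case $e = 1$, where $S_1/A_1$ is trivial so the formula still holds, and when $m = 1$ the first factor is trivial. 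Assembling these ingredients produces the stated decomposition.

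The main obstacle is the abelianisation step for the wreath product; everything else is a direct assembly of classical inputs. This computation is routine but slightly fiddly, and it is the only content in the proof that is not immediate from the centraliser decomposition and the structure of centralisers in $S_n$.
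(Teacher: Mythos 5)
Your proposal is correct and follows essentially the same route as the paper: centraliser decomposition of $\HH^1(kG)$, identification of $C_{S_n}(x)$ as $\prod_i C_{\lambda_i}\wr S_{e_i}$, and computation of the abelianisation of each wreath product as $C_{\lambda_i}\times(S_{e_i}/A_{e_i})$. The only (cosmetic) difference is that you exhibit the abelianisation map directly and check its kernel is the commutator subgroup, whereas the paper computes the derived subgroup via the formula $[N\rtimes H,N\rtimes H]=([N,N][N,H])\rtimes[H,H]$; both amount to the same identification of $[C_{\lambda_i}\wr S_{e_i},C_{\lambda_i}\wr S_{e_i}]$ with $C_{\lambda_i}^{e_i-1}\rtimes A_{e_i}$.
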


\begin{proof}
Using the decomposition of $\mathrm{HH}^{1}(kS_n)$ into the direct sum of the first group cohomology of centraliser subgroups we have:
\[\HH^1(kS_n)=\bigoplus_{\lambda \in \mathcal{P}(n)} \mathrm{H}^1(C_{S_n}(x),k)=\bigoplus_{\lambda \in \mathcal{P}(n)} \Hom(C_{S_n}(x),k^{+})\]
Let $x$ be a representative element in each
conjugacy class of $kS_n$. The first step is to study the centraliser $C_{S_n}(x)$. As consequence of the fact that conjugation 
permutes cycles of the same length we have that
$C_{S_n}(x)=\prod_{i=1}^sC_{\lambda_i} \wr S_{e_i}$ where 
$\wr$ denotes the wreath product of $C_{\lambda_i}$ by 
$S_{e_i}$. In fact, there are two groups that sit
inside $C_{S_n}(x)$ and that generate $C_{S_n}(x)$. The first one is
$E:=S_{e_1}\times \dots \times S_{e_s}$ and  the second is
$\prod_{i=1}^s C^{e_i}_{\lambda_i}$. 
It is easy to check that
$C_{S_n}(x)=\prod_{i=1}^s(C^{e_i}_{\lambda_i} \rtimes S_{e_i})$ where $S_{e_i}$ acts on 
the direct product $C^{e_i}_{\lambda_i}$ by permutation.

The next step is to study the abelianisation of $C_{S_n}(x)$. 
Note that  the derived subgroup of $C_{S_n(x)}$ is given by 
\[\prod_{i=1}^s[C_{\lambda_i}\wr S_{e_i}, C_{\lambda_i}\wr 
S_{e_i}].\] 
In general, the derived subgroup of a semi-direct product $ N\rtimes H$
is equal to $([N,N][N,H])\rtimes[H,H]$. In our case $H=S_{e_i}$ and 
$N=C^{e_i}_{\lambda_i}$. So $[S_{e_i}, 
S_{e_i}]=A_{e_i}$ and $[N,N]=1$. It is easy to check that
$[C^{e_i}_{\lambda_i}, S_{e_i}]$ is isomorphic to
$C_{\lambda_i}^{e_{i}-1}$. Hence 
\[[C_{\lambda_i}\wr 
S_{e_i},C_{\lambda_i}\wr 
S_{e_i}]\cong C_{\lambda_i}^{e_{i}-1}\rtimes A_{e_i}\]
Consequently the abelianization of $C_{S_n}(x)$ is isomorphic to 
\[\prod_{i=1}^s (C_{\lambda_i}\times S_{e_i}/A_{e_i}).\] 
The statement follows.
\end{proof}

\begin{lemma}\label{lem_counting}
Let $p$ be a prime, and $n$ a nonnegative integer. The number of parts of length divisible by $p$ in all partitions of $n$, counted without multiplicity, is equal to the number of parts of length $p$ in all partitions of $n$, counted with multiplicity.
\end{lemma}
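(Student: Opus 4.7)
The plan is to compute both counts and show they are equal to $\sum_{j \geq 1} p(n - jp)$, where $p(k)$ denotes the number of partitions of $k$, with the convention $p(k) = 0$ for $k < 0$. The whole argument is a double-counting manoeuvre, so I do not expect any serious difficulty; the only points to verify are two elementary bijections involving removal of parts.

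First I would rewrite the left-hand side by swapping the order of summation. The quantity of interest is
\[
\sum_{\lambda \in \mathcal{P}(n)} \#\{m \geq 1 : pm \text{ is a part of } \lambda\} \;=\; \sum_{m \geq 1} \#\{\lambda \in \mathcal{P}(n) : pm \text{ is a part of } \lambda\}.
\]
Partitions of $n$ containing $pm$ as a part (at least once) are in bijection with $\mathcal{P}(n - pm)$: remove one copy of $pm$ in one direction, and adjoin a copy of $pm$ in the other. So the $m$th term equals $p(n - pm)$, and the whole sum becomes $\sum_{m \geq 1} p(n - pm)$.

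Next I would treat the right-hand side analogously. The multiplicity of $p$ in a partition $\lambda$ is equal to $\#\{j \geq 1 : p \text{ occurs at least } j \text{ times in } \lambda\}$, so exchanging sums yields
\[
\sum_{\lambda \in \mathcal{P}(n)} (\text{multiplicity of } p \text{ in } \lambda) \;=\; \sum_{j \geq 1} \#\{\lambda \in \mathcal{P}(n) : p \text{ occurs at least } j \text{ times in } \lambda\}.
\]
Removing $j$ copies of $p$ sets up a bijection between the set on the right and $\mathcal{P}(n - jp)$, whence the $j$th term equals $p(n - jp)$. After relabelling the summation index, both sides coincide with $\sum_{j \geq 1} p(n - jp)$, which completes the proof.
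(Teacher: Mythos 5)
Your argument is correct. Both of your auxiliary bijections (remove/adjoin one copy of $pm$; remove/adjoin $j$ copies of $p$) are the standard ones and are genuinely mutually inverse, so both counts do equal $\sum_{j\geq 1}p(n-jp)$ and the identity follows.

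The route differs from the paper's in presentation though not in substance. The paper constructs a single explicit bijection between the two counted sets: a pair (partition, $e$th occurrence of the part $p$) is sent to the partition obtained by merging those $e$ copies of $p$ into one part of size $ep$, paired with that new part. You instead evaluate each side separately against the common intermediate quantity $\sum_{j\geq 1}p(n-jp)$, which amounts to slicing both sets into fibres indexed by $j$ (respectively $m$) and matching fibres of equal cardinality; composing your two bijections for $j=m$ recovers exactly the paper's map on the corresponding slice. Your version is arguably easier to verify term by term and makes the closed form $\sum_{j\geq 1}p(n-jp)$ explicit (which feeds directly into the generating-series computation in the following theorem), while the paper's single bijection is more self-contained and avoids introducing the partition function. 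One small stylistic caution: you use $p(k)$ for the partition function while $p$ is already the prime; in the paper's notation $|\mathcal{P}(k)|$ would avoid the clash.
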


\begin{proof}
Using the notation $\lambda=(\lambda_1^{e_1},\dots,\lambda_s^{e_s})$ for a partition of $n$, we consider the set $\mathcal{S}_1$ of pairs $\{ (\lambda, e) \text{ with some }\lambda_i =p \text{ and } 1\leq e\leq e_i\}$, and the set $\mathcal{S}_2$ of pairs $\{ (\lambda, \lambda_i) \text{ with }p| \lambda_i \}$. We define a function $\mathcal{S}_1\to \mathcal{S}_2$ by the rule $(\lambda,e)\mapsto (\lambda',ep)$, where $\lambda'=(\lambda_1^{e_1},\dots,(ep)^{e'},\dots ,p^{e_i-e},\dots,\lambda_s^{e_s})$ and where $e'=e_j+1$ if $\lambda_j=ep$ was already a part of $\lambda$, or $e'=1$ if not (if it happens that $e=1$, then $\lambda=\lambda'$). This function is a bijection, and therefore $|\mathcal{S}_1|=|\mathcal{S}_2|$. Since $|\mathcal{S}_1|$ is the number of parts of length $p$ in all partitions of $n$, and $|\mathcal{S}_2|$ is the number of parts of length divisible by $p$ in all partitions of $n$ (without multiplicity), we are done.
\end{proof}

\begin{theorem}
\label{dimHH1Snot2}
If the characteristic of the field $k$ is different from $2$, then
\[
\HH^1(kS_n)\cong\bigoplus_{\lambda\in
\mathcal{P}(n)}\Hom \Big (\prod_{p | \lambda_i} C_{\lambda_i},k^{+}\Big)
\]

Therefore, $\dim_k(\HH^1(kS_n))$ is equal to the total number of parts, counted without multiplicity, divisible by $p$ for all partitions of $n$.  These numbers are equal to the number of $p$'s in all partitions of $n$ which are given by the generating series

\[
\sum_{n\geq 0} \dim_k(\HH^1(kS_n))t^n=\frac{t^{p}}{1-t^{p}}\prod_{n\geq 1}\frac{1}{(1-t^n)}.
\]
\end{theorem}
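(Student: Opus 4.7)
The starting point is Theorem \ref{dim}, which already expresses $\HH^1(kS_n)$ as a direct sum over partitions of homomorphism groups into $k^+$. The plan is to simplify the summand attached to each partition, then apply Lemma \ref{lem_counting}, and finally extract a generating function by a standard manipulation.

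First, I would inspect the factor $\prod_{i=1}^s (C_{\lambda_i} \times S_{e_i}/A_{e_i})$ appearing in Theorem \ref{dim}. Each $S_{e_i}/A_{e_i}$ has order at most $2$, so under the hypothesis $\chr(k) \neq 2$ it admits no nonzero homomorphism to $k^+$ (whose nonzero elements have additive order $p$). These factors therefore drop out, leaving $\Hom(\prod_i C_{\lambda_i}, k^+)$. Similarly, any homomorphism $C_{\lambda_i} \to k^+$ must send a generator to an element annihilated by $\lambda_i$; this forces $p \mid \lambda_i$, and in that case the Hom space is one-dimensional over $k$, parametrised freely by the image of the generator. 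This yields both the displayed decomposition and the combinatorial count: $\dim_k \HH^1(kS_n)$ is the total number of distinct parts divisible by $p$ across all partitions of $n$.

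Next, Lemma \ref{lem_counting} converts this to the number of occurrences of $p$ as a part, with multiplicity, in all partitions of $n$. To package this as a generating function I would re-parametrise partitions by their multiplicity sequences $(a_1, a_2, \ldots)$, so that $|\lambda| = \sum_{i \geq 1} i a_i$ and the multiplicity of $p$ in $\lambda$ is simply $a_p$. Splitting off the factor for $i=p$ gives
\[
\sum_{n \geq 0} \dim_k \HH^1(kS_n)\, t^n = \left(\sum_{a \geq 0} a\, t^{pa}\right) \prod_{i \neq p} \frac{1}{1-t^i},
\]
and the identity $\sum_{a \geq 0} a\, t^{pa} = t^p/(1-t^p)^2$ then collapses this to the stated closed form. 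There is no real obstacle here; the only delicate point is that ``characteristic different from $2$'' is used precisely to kill the $S_{e_i}/A_{e_i}$ contributions, while the $p$-torsion structure of $k^+$ matches exactly the parts $\lambda_i$ divisible by $p$.
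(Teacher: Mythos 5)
Your proposal is correct and follows the paper's proof essentially step for step: kill the $S_{e_i}/A_{e_i}$ factors using $\chr(k)\neq 2$, identify the dimension with the count of distinct parts divisible by $p$, and convert via Lemma \ref{lem_counting} to counting occurrences of $p$ with multiplicity. The only (harmless) difference is at the last step, where the paper cites a general identity of Riordan for $\sum_n t^n\sum_{\lambda}L(\lambda)$, while you derive the needed special case directly from the multiplicity-sequence factorisation $\bigl(\sum_{a\geq 0}a\,t^{pa}\bigr)\prod_{i\neq p}(1-t^i)^{-1}=\frac{t^p}{1-t^p}\prod_{i\geq 1}(1-t^i)^{-1}$, which is equally valid.
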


\begin{proof}
In Theorem \ref{dim}, the term $S_{e_i}/A_{e_i}$ will not contribute since the characteristic of the field is greater than $2$ and $S_{e_i}/A_{e_i}$ is either trivial or isomorphic to $C_2$. This yields the first statement. 

We also learn that $\dim_k(\HH^1(kS_n))$ is equal to the total number of parts divisible by $p$ in all partitions of $n$, counted without multiplicity. By Lemma \ref{lem_counting}, this the number of time $p$ occurs as a part in a partition of $n$. The generating series for this sequence can be found in \cite{Ri}. More precisely, we can associate to any sequence $(a_i)$ the function on partitions $L(\lambda):=\sum a_i \kappa_i$, where  $\kappa_i$ is the number of parts of size $i$ in $\lambda$; then 
 \cite[p185 Equation 23]{Ri} reads
\[
\sum_{n\geq0}t^n\sum_{\lambda\in \mathcal{P}(n)}L(\lambda)= \left(\sum_{n\geq 1}\frac{a_nt^{n}}{1-t^{n}}\right)\prod_{n\geq 1}\frac{1}{(1-t^n)}.
\]
If we take $a_p=1$ and $a_i=0$ for $i\neq p$ then we obtain the desired series.
\end{proof}

\begin{theorem}
If $k$ be a field of characteristic $2$ then 
\[
\sum_{n\geq 0} \dim_k(\HH^1(kS_n))t^n=\frac{2t^{2}}{1-t^{2}}\prod_{n\geq 1}\frac{1}{(1-t^n)}.
\]
\end{theorem}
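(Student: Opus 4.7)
The plan is to reuse the strategy of Theorem~\ref{dimHH1Snot2}, modifying it to account for the fact that in characteristic $2$ the quotients $S_{e_i}/A_{e_i}$ now contribute. Specifically, since $S_{e_i}/A_{e_i}$ is trivial when $e_i=1$ and isomorphic to $C_2$ when $e_i\geq 2$, we have $\dim_k \Hom(S_{e_i}/A_{e_i},k^{+})=1$ exactly when $e_i\geq 2$, while $\dim_k \Hom(C_{\lambda_i},k^{+})=1$ exactly when $2\mid \lambda_i$. Plugging this into Theorem~\ref{dim} gives
\[
\dim_k \HH^1(kS_n) = A_n + B_n,
\]
where
\[
A_n=\sum_{\lambda\in\mathcal{P}(n)}\#\{i : 2\mid\lambda_i\}, \qquad B_n=\sum_{\lambda\in\mathcal{P}(n)}\#\{i : e_i\geq 2\}.
\]

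For $A_n$ I would simply re-run the argument of Theorem~\ref{dimHH1Snot2}: Lemma~\ref{lem_counting} identifies $A_n$ with the number of occurrences of $2$ across all partitions of $n$, and the generating-function identity from \cite{Ri} with $a_2=1$ and $a_i=0$ for $i\neq 2$ yields
\[
\sum_{n\geq 0} A_n t^n = \frac{t^{2}}{1-t^{2}}\prod_{n\geq 1}\frac{1}{1-t^n}.
\]

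For $B_n$ I would swap the order of summation and count, for each $m\geq 1$, the partitions of $n$ in which the part $m$ occurs with multiplicity at least two. Starting from $\prod_{n\geq 1}(1-t^n)^{-1}$ and replacing the factor $(1-t^m)^{-1}=\sum_{k\geq 0}t^{km}$ by its tail $\sum_{k\geq 2}t^{km}=t^{2m}(1-t^m)^{-1}$ restricts the generating function to exactly these partitions and contributes $t^{2m}\prod_{n\geq 1}(1-t^n)^{-1}$. Summing over $m\geq 1$ then produces
\[
\sum_{n\geq 0} B_n t^n = \sum_{m\geq 1} t^{2m}\prod_{n\geq 1}\frac{1}{1-t^n} = \frac{t^{2}}{1-t^{2}}\prod_{n\geq 1}\frac{1}{1-t^n},
\]
and adding the two generating series yields the claimed formula. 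The only real subtlety is the identification that in characteristic $2$ the $S_{e_i}/A_{e_i}$ factors double the count in precisely the right way; beyond that the argument is a routine generating-function manipulation parallel to the one in Theorem~\ref{dimHH1Snot2}.
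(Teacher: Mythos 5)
Your proposal is correct and follows essentially the same route as the paper: both split the dimension into the count of parts divisible by $2$ (handled exactly as in Theorem~\ref{dimHH1Snot2} via Lemma~\ref{lem_counting} and the Riordan identity) and the count of parts of multiplicity at least $2$ (handled by replacing the factor $(1-t^m)^{-1}$ in the partition generating function by its tail $t^{2m}(1-t^m)^{-1}$ and summing over $m$). No substantive differences.
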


\begin{proof}
By Theorem \ref{dim}
\[
\HH^1(kS_n)\cong\bigoplus_{\lambda\in
\mathcal{P}(n)}\Hom (\prod_{2| \lambda_i} C_{\lambda_i},k^{+})\oplus \Hom (\prod_{e_i\geq 2} C_{2},k^{+}).
\]
So the the computation of $\dim_k(\HH^1(kS_n))$ splits into two parts. For the first summand we count the number of parts in partitions on $n$ that are divisible by $2$; as in the proof of Theorem \ref{dimHH1Snot2} this is given by the generating series
\[
\frac{t^{2}}{1-t^{2}}\prod_{n\geq 1}\frac{1}{(1-t^n)}.
\]
For the second summand we must count the number of parts with multiplicity $2$ or more in all partitions of $n$. In the usual formula for the total number of partitions
\[
\sum_{n\geq0, \lambda\in\mathcal{P}(n)} t^n\ =\prod_{n\geq 1}\frac{1}{(1-t^n)}
\]
(cf.\ \cite{Ri}), the factor $1/{(1-t^i)}=(1+t^i+t^{2i}+\cdots)$ corresponds to parts of length $i$, with a term $t^{ie}$ contributing $1$ to the coefficient of $\lambda$ if $\lambda$ contains a part of length $i$ with multiplicity $e$. To modify this formula to count partitions with a chosen part of multiplicity $e\geq 2$, we simply replace this factor with $t^{2i}/{(1-t^i)}=(t^{2i}+t^{3i}+t^{4i}+\cdots)$. In total we get
\[
\sum_{i} \left(t^{2i}\prod_{n\geq 1}\frac{1}{(1-t^n)}\right) = \frac{t^{2}}{1-t^{2}}\prod_{n\geq 1}\frac{1}{(1-t^n)}.
\]
The statement of the Theorem follows.
\end{proof}

An element $x$ in a finite group is \emph{$p$-regular} if its order is coprime to $p$, and otherwise it is called \emph{$p$-singular}. 
In the case of $S_n$, the $p$-singular elements are those containing at least one cycle of length divisible by $p$. In other words, the corresponding partition  contains a part divisible by $p$. 
We write  $\mathcal{SP}(n)$ for the set of all  partitions of $n$ corresponding to conjugacy classes of $p$-singular elements.

\begin{corollary}
If $k$ is a field of characteristic $p$ then 
$\mathrm{dim}_k(\HH^1(kS_n))\geq |\mathcal{SP}(n)|$.
\end{corollary}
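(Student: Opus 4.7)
The plan is to deduce this directly from the decomposition in Theorem \ref{dim}. Writing
\[
\HH^1(kS_n)=\bigoplus_{\lambda\in \mathcal{P}(n)}\Hom \Big(\prod_{i=1}^{s}(C_{\lambda_i}\times (S_{e_i}/A_{e_i})),k^{+} \Big),
\]
it suffices to show that each summand indexed by a $p$-singular partition $\lambda \in \mathcal{SP}(n)$ contributes at least one dimension over $k$. Summing over $\mathcal{SP}(n)$ then gives the lower bound $\dim_k(\HH^1(kS_n)) \geq |\mathcal{SP}(n)|$.

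The key observation to carry out is that $\Hom(C_m, k^+)\neq 0$ whenever $p\mid m$. Indeed, a group homomorphism from the cyclic group $C_m$ to the additive group $k^+$ is determined by the image of a generator, which must be an element of $k^+$ of order dividing $m$. Since $k$ has characteristic $p$, every element of $k^+$ has additive order dividing $p$, so when $p\mid m$ the generator may be sent to any element of $k$, yielding $\Hom(C_m, k^+)\cong k$ as a $k$-vector space.

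Now for $\lambda\in \mathcal{SP}(n)$, by definition at least one part $\lambda_i$ is divisible by $p$, so the factor $C_{\lambda_i}$ in the product contributes a nonzero Hom to $k^+$. Restricting a homomorphism from the full product along the projection to this $C_{\lambda_i}$ gives an injection $\Hom(C_{\lambda_i},k^{+})\hookrightarrow \Hom \big(\prod_{i}(C_{\lambda_i}\times (S_{e_i}/A_{e_i})),k^{+}\big)$, so the $\lambda$-summand has dimension $\geq 1$. Summing over all $\lambda\in \mathcal{SP}(n)$ completes the proof.

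There is no real obstacle here; this is a direct consequence of Theorem \ref{dim} once the elementary Hom computation above is noted, and the argument works uniformly in the characteristic (including $p=2$, where the $S_{e_i}/A_{e_i}$ factors only add to the total).
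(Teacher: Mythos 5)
Your proof is correct and follows essentially the route the paper intends: the corollary is left unproved there, but it is meant as an immediate consequence of Theorem \ref{dim} (equivalently of Theorem \ref{dimHH1Snot2}), since each $p$-singular partition contributes a summand containing a factor $\Hom(C_{\lambda_i},k^{+})\cong k$ for some part $\lambda_i$ divisible by $p$. Your explicit verification that $\Hom(C_m,k^{+})\neq 0$ exactly when $p\mid m$, and the remark that the argument is uniform in $p$ including $p=2$, fill in precisely the details the paper omits.
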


Finally, in the next section we will need the following fact.

\begin{corollary}
\label{S_p}
If $k$ is a field of characteristic $p>2$ then 
$\mathrm{dim}_k(\HH^1(kS_p))=1$.
\end{corollary}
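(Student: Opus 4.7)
The plan is to invoke Theorem \ref{dimHH1Snot2} with $n=p$, which identifies $\dim_k \HH^1(kS_n)$ with the number of times $p$ appears as a part across all partitions of $n$ (counted with multiplicity), equivalently the number of parts of length divisible by $p$ counted without multiplicity.

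For $n=p$ I would argue directly on the combinatorial side: among all partitions of $p$, the only one whose parts include an integer divisible by $p$ is the one-part partition $(p)$, since any other partition of $p$ must have all parts strictly smaller than $p$, and hence none divisible by $p$. In the partition $(p)$, the integer $p$ appears exactly once. Therefore the total count is $1$, and $\dim_k \HH^1(kS_p) = 1$.

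As a sanity check I would also read off the coefficient of $t^p$ directly from the generating function in Theorem \ref{dimHH1Snot2}:
\[
\frac{t^{p}}{1-t^{p}}\prod_{n\geq 1}\frac{1}{(1-t^n)} = (t^p+t^{2p}+\cdots)\bigl(1+t+2t^2+\cdots\bigr).
\]
The only contribution to the coefficient of $t^p$ comes from pairing the leading term $t^p$ of the first factor with the constant term $1$ of the Euler product, giving coefficient $1$.

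There is no real obstacle to overcome here; the whole content of the corollary is already packaged into Theorem \ref{dimHH1Snot2}, and specialising $n=p$ amounts to a one-line combinatorial observation.
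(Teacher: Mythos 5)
Your proposal is correct and follows essentially the same route as the paper: the paper likewise reduces to counting the parts of length $p$ (equivalently, parts divisible by $p$) across all partitions of $p$ and observes there is exactly one, coming from the partition $(p)$. Your generating-function sanity check is a harmless addition but not needed.
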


\begin{proof}
By Theorem \ref{dim} we just need to count the the number of parts of length $p$ in all partitions of $p$, and there is clearly just one.
\end{proof}

\begin{remark}
Recently, the authors of \cite{BKL} have also computed $\dim_k(\HH^1(kS_n))$ in terms of generating functions.
\end{remark}

\section{Counterexamples to the existence of non-integrable derivations}
\label{cexampleexist}

In this section we answer a question considered by Farkas, Geiss and  Marcos.

\begin{question}[\cite{FGM}]
\label{question2}
Let $G$ be a finite group and let $k$ be a field such that $\mathrm{char}(k)$
divides the order of $G$, must $kG$ admit a non-integrable derivation?
\end{question}

Since all inner derivations are integrable 
 a necessary condition that should hold in order to state the previous question is the following: let $G$ be a finite group and assume the characteristic of the field $k$ divides the order of $G$. Then $\HH^1(kG)\neq 0$. This has been shown in \cite{FJLM} using the classification of finite simple groups. 
 
The authors state their question in terms of the automorphism group scheme, writing that \emph{It is tempting to conjecture that $kG$ does not have a smooth automorphism group scheme} \cite[below Theorem 2.2]{FGM}.
 Their question is equivalent to Question \ref{question2} by Theorem 1.2 in \cite{FGM}: the automorphism group scheme of a finite dimensional algebra $A$ is smooth if and only if every derivation of $A$ is integrable. 

In the following theorem we exhibit a family of counter-examples for any algebraically closed field of prime characteristic.

\begin{theorem}
\label{Spint}
Let $k$ a field of characteristic $p\geq 3$  and let $kS_p$ be the group algebra of the symmetric
group on $p$ letters. Then  $\HH^1(kS_p)$ has a $k$-basis given by a single integrable
 derivation.
\end{theorem}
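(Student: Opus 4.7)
Since Corollary~\ref{S_p} gives $\dim_k \HH^1(kS_p)=1$, the theorem reduces to exhibiting a single non-inner integrable derivation on $kS_p$. My plan is to transport the problem via derived equivalence to a group algebra small enough to be analysed by hand. A Sylow $p$-subgroup of $S_p$ is cyclic of order $p$, so every non-principal block of $kS_p$ has defect $0$ and is a matrix algebra with trivial $\HH^1$; hence $\HH^1(kS_p)=\HH^1(B_0)$ and likewise for integrable classes. By Rickard's theorem for blocks with cyclic defect group, $B_0$ is derived equivalent to $kN$, where $N=N_{S_p}(P)\cong C_p\rtimes C_{p-1}$ is the normalizer of a Sylow $p$-subgroup. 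By Theorem~\ref{invarianceint} this yields $\HH^1_{\rm int}(B_0)\cong \HH^1_{\rm int}(kN)$ as Lie algebras, so it suffices to produce a non-inner integrable derivation on $kN$.

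Write $N=\langle\sigma,\tau\mid \sigma^p=\tau^{p-1}=1,\ \tau\sigma\tau^{-1}=\sigma^r\rangle$ for a primitive root $r$ modulo $p$. Since $\gcd(p-1,p)=1$, conjugation by $\tau$ diagonalises on $kC_p$; choosing a $\tau$-eigenvector $y\in kC_p$ with eigenvalue $r$ and leading term $\sigma-1$, one has $y^p=0$ and an algebra isomorphism $kC_p\cong k[y]/(y^p)$ with $\tau y\tau^{-1}=ry$. I would then define a $k\llbracket t\rrbracket$-algebra map $\phi_t$ on $kN\llbracket t\rrbracket$ by $\phi_t(y)=(1+t)y$ and $\phi_t(\tau)=\tau$. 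The defining relations $y^p=0$, $\tau^{p-1}=1$ and $\tau y\tau^{-1}=ry$ are all preserved (the first because $(1+t)^p y^p=0$; the others are immediate), and $\phi_t$ is invertible via $y\mapsto(1+t)^{-1}y$, so $\phi_t$ is an automorphism. Its first-order term is the derivation $D$ with $D(y)=y$ and $D(\tau)=0$, exhibiting $D$ as integrable.

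Finally I would verify that $D$ is not inner. Suppose $D=[z,-]$ for some $z\in kN$, and write $z=\sum_{i,j}z_{i,j}\,y^i\tau^j$ in the basis $\{y^i\tau^j : 0\leq i<p,\ 0\leq j<p-1\}$. The condition $[z,\tau]=0$ gives $z_{i,j}(1-r^i)=0$, forcing $z_{i,j}=0$ unless $r^i=1$, i.e.\ $i\in\{0,p-1\}$; so $z=\sum_j b_j\tau^j+\sum_j c_j\,y^{p-1}\tau^j$. Since $[y^{p-1}\tau^j,y]=r^j y^p\tau^j-y^p\tau^j=0$ (using $y^p=0$), one obtains $[z,y]=\sum_j b_j(r^j-1)\,y\tau^j$; equating with $D(y)=y\tau^0$ and comparing the coefficient of $y\tau^0$ forces $b_0(r^0-1)=b_0\cdot 0=1$, a contradiction. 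Thus $[D]$ is a non-zero integrable class in $\HH^1(kN)$, and transferring across the derived equivalence produces the required integrable generator of $\HH^1(kS_p)$. The chief obstacle is the appeal to Rickard's derived-equivalence theorem; a self-contained alternative is to run the same one-parameter family argument directly on the quiver presentation of $B_0$, scaling every arrow by $1+t$ while fixing idempotents.
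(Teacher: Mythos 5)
Your proof is correct and follows essentially the same route as the paper: reduce to the principal block, pass through Rickard's derived equivalence to the Brauer-star model, and integrate an explicit non-inner derivation via a one-parameter family of automorphisms $1+Dt$ that rescales a radical generator. Your target algebra $kN_{S_p}(P)\cong (k[y]/(y^p))\rtimes C_{p-1}$ is in fact isomorphic to the symmetric Nakayama algebra $N^{p-1}_{p-1}$ used in the paper's proof, so the only genuine differences are presentational: group-algebra generators versus quiver arrows, and a direct hand computation of non-innerness in place of the appeal to the toral-rank theorem of \cite{BR2}.
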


The first part of  Theorem \ref{Spint} will follow from Corollary \ref{S_p}. To prove that the only outer derivation in $\HH^1(kS_p)$ is integrable, we will use the fact that the only non-semisimple block of $kS_p$ is derived equivalent to a symmetric Nakayama algebra.

We recall some basic results about blocks of symmetric groups.

A \emph {node} $(i,j)$ in the Young diagram $[\lambda]$ of $\lambda$ forms part of the \emph{rim} if $(i + 1, j + 1) \notin [\lambda]$. A \emph{$p$-hook} in $\lambda$ is a connected part of the rim of $[\lambda]$ consisting of exactly $p$ nodes, whose removal leaves the Young diagram  a partition. The \emph{$p$-core} of $\lambda$, usually denoted by $\gamma(\lambda)$, is the partition obtained by repeatedly removing all $p$-hooks from $\lambda$. The number of $p$-hooks we remove is  the \emph{$p$-weight} of $\lambda$, usually denoted by $w$. It is easy to note  that the $p$-core of a partition is well-defined, that is, is independent of the way in which we remove the $p$-hooks. The blocks  of  group algebras of symmetric groups are determined by  $p$-cores and  weights:

\begin{theorem}[Nakayama Conjecture]
The blocks of the symmetric group $S_n$ are labelled by pairs $(\gamma,w)$, where $\gamma$ is a $p$-core and $w$ is the associated $p$-weight such that $n = |\gamma| + pw$. Hence the Specht module $S^{\lambda}$ lies in the block labelled by $(\gamma, w)$ of $kS_n$ if and only if $\lambda$ has $p$-core  $\gamma$ and weight $w$.
\end{theorem}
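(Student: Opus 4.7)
The plan is to use the classical block-theoretic criterion that two ordinary irreducible characters $\chi^\lambda$ and $\chi^\mu$ of $S_n$ lie in the same $p$-block if and only if their central characters $\omega_\lambda(C):=|C|\chi^\lambda(g_C)/\chi^\lambda(1)$ agree modulo $\mathfrak{p}$ on every conjugacy class $C$, where $\mathfrak{p}$ is a fixed prime of $\overline{\mathbb{Z}}$ over $p$. The Nakayama conjecture then reduces to the purely character-theoretic statement that $\omega_\lambda\equiv\omega_\mu\pmod{\mathfrak{p}}$ for every class if and only if $\lambda$ and $\mu$ have the same $p$-core $\gamma$.

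The combinatorial engine is the Murnaghan--Nakayama rule, which expresses $\chi^\lambda(g_C)$ as a signed sum over rim-hook tableaux of $[\lambda]$ whose hook lengths match the cycle type of $g_C$, combined with the hook length formula $\chi^\lambda(1)=n!/\prod_{(i,j)\in[\lambda]}h_{ij}$. To organise the resulting arithmetic I would pass to the James abacus: encode $\lambda$ by beta-numbers distributed on $p$ runners, so that removing a $p$-hook corresponds to sliding a single bead one step up its runner, the $p$-core $\gamma(\lambda)$ is the fully-compressed configuration, and the $p$-quotient $(\lambda^{(0)},\dots,\lambda^{(p-1)})$ records the configurations on the individual runners. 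The essential arithmetic input is that the hook lengths of $[\lambda]$ divisible by $p$ correspond, under this encoding, bijectively to hook lengths of the $p$-quotient, yielding a clean factorisation of $\prod h_{ij}$.

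The core of the proof is then a $\mathfrak{p}$-adic calculation: one reduces to testing the central character congruence on conjugacy classes whose cycle type consists of parts equal to $p$ together with fixed points (this reduction is standard in $p$-block theory). For such a class $C$, the Murnaghan--Nakayama sum for $\chi^\lambda(g_C)$ is indexed by standard tableaux on the $p$-quotient, and dividing by $\chi^\lambda(1)$ and using the hook-length factorisation above cancels the entire $p$-quotient contribution modulo $\mathfrak{p}$, so that $\omega_\lambda(C)$ modulo $\mathfrak{p}$ depends only on the $p$-core $\gamma(\lambda)$. The converse, that distinct $p$-cores give distinct central characters modulo $\mathfrak{p}$, is verified by exhibiting a witnessing class on which the resulting expressions differ. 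The main obstacle is the precise $\mathfrak{p}$-adic bookkeeping of hook lengths through the abacus, in particular ensuring that the $p$-quotient contributions to numerator and denominator cancel exactly modulo $\mathfrak{p}$; the abacus picture is precisely the tool that makes this cancellation transparent.
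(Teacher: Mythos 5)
This statement is the classical Nakayama Conjecture (Brauer--Robinson, 1947); the paper does not prove it but simply recalls it as background (with \cite{Cra} as the implicit reference), so there is no in-paper argument to compare against. Your strategy --- Brauer's central-character criterion, the Murnaghan--Nakayama rule, the hook-length formula, and the James abacus with $p$-cores and $p$-quotients --- is a recognised route to this theorem, close in spirit to the proofs of Meier--Tappe and of James--Kerber, and the ingredients you name (the bijection between hooks of length divisible by $p$ and hooks of the $p$-quotient, the resulting factorisation of $\prod h_{ij}$) are exactly the right combinatorial inputs.

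However, there is a concrete gap at the step you label as ``standard'': the reduction of the congruence $\omega_\lambda\equiv\omega_\mu\pmod{\mathfrak p}$ to conjugacy classes of cycle type $(p^a,1^{n-pa})$. To conclude that two characters lie in the same block you must verify the congruence on \emph{all} classes, or at least on a set of class sums generating $Z(\mathbb{F}_pS_n)$ as an algebra; the class sums of type $(p^a,1^{n-pa})$ do not form such a set. Already in $\mathbb{F}_3S_3$ the identity together with the class sum $U$ of $3$-cycles satisfies $U^2=2+U$, so they span only a $2$-dimensional subalgebra of the $3$-dimensional centre, missing the transposition class sum entirely; agreement of central characters on these classes therefore does not formally propagate to the remaining classes. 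The reduction that actually works (Farahat--Higman) is to the single-cycle classes $(k,1^{n-k})$ for all $k$, whose class sums do generate the centre integrally, after which one shows that $\omega_\lambda$ on these classes depends mod $\mathfrak p$ only on the $p$-core. Separately, for the converse direction you assert that distinct $p$-cores are separated by ``a witnessing class'' without producing one; this also requires an argument (e.g.\ evaluating on a class of cycle type $(p^w)\cup\gamma$ and using the nonvanishing coming from the hook-length factorisation). As written, both halves of the proof rest on unproved reductions, so the sketch does not yet constitute a proof.
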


Note that the statement above holds also for 
the simple modules, see the paragraph after 
Theorem 8.3.1 in \cite{Cra}. It is easy to see 
that blocks of weight $0$ are matrix algebras 
and blocks of weight $1$ have cyclic defect 
group.
 
Since we will be mainly interested in blocks with cyclic defect group, we recall some background on Brauer trees and Nakayama algebras. 

A Brauer graph consists of a finite undirected connected
graph, such that to each vertex we assign a cyclic ordering of the edges incident
to it, and an integer greater than or equal to one, called the multiplicity of
the vertex, see Definition 4.18.1 in \cite{Benson}. To each Brauer graph we can associate a finite dimensional algebra called Brauer graph algebra. A  Brauer tree is a Brauer graph which is a tree, and has at most one vertex with multiplicity greater than one. Such vertex is called the exceptional vertex and the multiplicity is called exceptional
multiplicity.  A Brauer star algebra  is a Brauer tree with a star-shape having $n$ edges and the exceptional vertex in the middle. 

Brauer tree algebras are important in modular representation theory because of the following result: A block $B$  with cyclic defect group of order $p^d $, having  $m$ simple modules, is a Brauer tree algebra with $m$ edges and with exceptional multiplicity $\frac{p^d-1}{m}$, see Theorem 6.5.5. in \cite{Benson}. For further background in modular representation theory of finite groups see also \cite{L1} and \cite{L2}.
 
A particular nice class of self-injective   algebras are self-injective Nakayama algebras. A basic algebra of a (connected) self-injective Nakayama algebra  is $kC_m/J_{m,n}$ where  where $C_m$ is the extended Dynkin quiver of type $\tilde{A}_m$ and  $J_{m,n}$ is the ideal in the path algebra $kC_m$ generated by the composition of $n+1$ consecutive arrows. Note also that every basic self-injective Nakayama algebra is a monomial algebra. In addition, the algebra $N^n_m$ is symmetric if and only if $m$ divides $n$, see \cite{Skow} for example.  The symmetric Nakayama algebra $N^{em}_{e}$ having $e$  simple
modules and Loewy length $em+1$ is  a Brauer
tree algebra with respect to the Brauer star with $e$ vertices and with exceptional
multiplicity $em$. It is worth noting that not every Brauer tree algebra is isomorphic is a symmetric Nakayama algebra, however, a result due to Rickard \cite{Rick} shows that every Brauer tree algebra is derived equivalent to a symmetric Nakayama algebra, see also \cite[Theorem 6.10.1]{Zim}.

\begin{theorem}
\label{Naktree}
Let $k$ be a field and let A be a Brauer tree algebra associated to a Brauer tree with $e$ edges and with
exceptional multiplicity $m$. Then $A$ is derived equivalent to $N^{me}_{e}$ where $N^{me}_{e}$ is the symmetric Nakayama algebra having $e$ vertices and admissible ideal $J_{e,me}$.
\end{theorem}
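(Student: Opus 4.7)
The plan is to reduce the theorem to Rickard's classification of Brauer tree algebras up to derived equivalence \cite{Rick}. That classification asserts that the derived equivalence class of a Brauer tree algebra depends only on the number of edges and on the exceptional multiplicity of its underlying Brauer tree. Accepting this result, the proof reduces to a short numerical check together with a citation.

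First I would verify that $N^{me}_{e}$ is itself a Brauer tree algebra whose underlying tree is the Brauer star with $e$ edges and exceptional multiplicity $m$. This is essentially recorded in the discussion immediately preceding the statement of the theorem: the symmetric Nakayama algebra $N^{me}_{e}$ has $e$ simple modules and projective indecomposables of Loewy length $me+1$, which matches the length of projectives at the edges of a Brauer star whose unique exceptional vertex has multiplicity $m$ and whose $e$ leaves have multiplicity $1$. The cyclic ordering of edges around the exceptional vertex corresponds to the cyclic quiver $C_{e}$ underlying $N^{me}_{e}$, and the relations generated by paths of length $me+1$ correspond to the Brauer graph relations of the star.

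Second, given a general Brauer tree algebra $A$ with $e$ edges and exceptional multiplicity $m$, I would invoke Rickard's theorem to conclude that $A$ and $N^{me}_{e}$ lie in the same derived equivalence class, since they share the same numerical invariants $(e,m)$. This is the content of the theorem.

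The main obstacle, were one to seek a self-contained argument rather than citing \cite{Rick}, would be the explicit construction of a tilting complex realising the equivalence. The standard approach folds branches of the Brauer tree by a sequence of Okuyama--Rickard style two-term mutations, each of which flips an edge of the tree while preserving both the number of edges and the exceptional multiplicity; after finitely many flips the tree degenerates to a star, at which point the endomorphism algebra of the iterated tilting complex is identified with $N^{me}_{e}$. Since this construction is well-documented (compare \cite[Theorem 6.10.1]{Zim}), we simply invoke the classification rather than reproducing it.
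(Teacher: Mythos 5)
Your proposal matches the paper exactly: the paper gives no proof of Theorem \ref{Naktree}, presenting it as Rickard's classification of Brauer tree algebras up to derived equivalence (citing \cite{Rick} and \cite[Theorem 6.10.1]{Zim}), together with the preceding observation identifying the symmetric Nakayama algebra $N^{me}_{e}$ as the Brauer star algebra with $e$ edges and exceptional multiplicity $m$. Your verification of that identification and your sketch of the folding construction are consistent with, and slightly more detailed than, what the paper records.
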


We have all the ingredients to prove  Theorem \ref{Spint}: 

\begin{proof}[Proof of Theorem \ref{Spint}]
The principal block of $kS_p$, denoted by $B_0$, has cyclic defect $C_p$ and the number of simple modules  of $B_0$ is $p-1$. This follows by Nakayama Conjecture since there are $p-1$ partitions having the same $p$-core of the partition representing the trivial module. The weight of $B_0$ is $1$ hence $B_0$ has cyclic defect $C_{p^d}$ for some $d$. In this case $B_0$  is a Brauer tree algebra for a Brauer tree with $p-1$ edges and exceptional multiplicity $1$, see after Example 5.1.4 in \cite{Cra}. Hence $B_0$ has cyclic defect $C_p$.
By Theorem \ref{Naktree} we have that $B_0$ is derived equivalent to a Nakayama algebra denoted by $N^{p-1}_{p-1}$. The Gabriel quiver associated with $N^{p-1}_{p-1}$ has a set of vertices given by $\{e_i\}_{i=1}^{p-1}$ and it has  $p-1$ arrows
$\{a_i\}_{i=1}^{p-1}$ such that $t(a_i)=s(a_{i+1})=e_{i+1}$ for $i\neq p-1$ and $t(a_{p-1})=s(a_1)$. The rest of 
the blocks of $kS_p$ are matrix algebras since they have weight 0. 
Therefore $\HH^1(kS_p)\cong\HH^1(N^{p-1}_{p-1})$ and this restricts to an isomorphism  $\HH^1_{\rm int}(kS_p)\cong\HH^1_{\rm int}(N^{p-1}_{p-1})$.

The first Betti number $\beta(Q)$ of the underying graph of $N^{p-1}_{p-1}$ is $1$. This is because the Gabriel quiver is connected, the number of edges is $p-1$, the number of vertices is $p-1$ and consequently $\beta(Q)=(p-1)-(p-1)+1=1$.
Since $N^{p-1}_{p-1}$ is monomial, by Theorem C in \cite{BR2} we have that the maximal toral rank is $1$ and it is easy to see that the  map $f$ sending $a_1$ to $a_1$
and sending any other arrow to zero is a diagonal outer derivation. From Corollary \ref{S_p}, we have $\mathrm{dim}_k(\HH^1(kS_p))=1$. We deduce that 
there are no other outer derivations. Recall that  $N^{p-1}_{p-1}$ is the bound quiver algebra $kC_{p-1}/J_{p-1,p-1}$ where $J_{p-1,p-1}$ is the ideal in the path algebra $kC_{p-1}$ generated by the composition of $p$ consecutive arrows. Let $\rho_1=(a_1\dots a_{p-1}a_1=0)$. Then any other relation that generates the ideal $J_{p-1,p-1}$ is given by the path that starts and ends at  $a_i$ for every $2\leq i\leq n$. We construct the automorphism 
$\alpha=1+ ft$ $\in \mathrm{Aut}(A[[t]])$.  This 
$k[[t]]$-automorphism preserves the relations. We check for $\rho_1$ since for the rest of generating relations the proof is similar.
We have 
\begin{equation*}
    \begin{split}
    \alpha(a_1\dots a_{p-1}a_1)&=\alpha(a_1)\dots\alpha(a_{p-1})\alpha(a_1)=(a_1+a_1t)a_2\dots a_{p-1}(a_1+a_1t)\\
    &=(a_1\dots a_{p-1}+a_1\dots a_{p-1}t)(a_1+a_1t)=0
\end{split}
\end{equation*}
Therefore $f$ is integrable and the statement follows.
\end{proof}

\begin{remark}
Note that in order to construct the previous counterexample we have 
considered a
Gabriel quiver without loops. In \cite{FGM} the authors consider $p$-groups, 
which are local algebras, hence all the arrows are loops. 
\end{remark}

\appendix
\section*{Appendix. Gerstenhaber's composition complexes}

In Section \ref{sec_int_Ger} we used results of Gerstenhaber \cite{G} to establish facts about the Lie algebra of integral derivations on an algebra. In this appendix we survey the more general definitions in \cite{G} and compare them with the context of Section \ref{sec_int_Ger}.

\begin{definition}[\cite{G}]\label{ccdef}
Let $k$ be a commutative ring. A composition complex $C$ over $k$ is a sequence $C^0,C^1,...$ of $k$-modules, and for each $m,n$ and $0\leq i\leq m-1$ a bilinear composition operation
\[
C^m\times C^n \to C^{m+n-1} \quad (f,g) \mapsto f\circ_ig,
\]
as well as for each $m,n$ a bilinear cup product operation 
\[
C^m\times C^n \to C^{m+n}\quad (f,g) \mapsto f\smile g,
\]
satisfying, for any $f\in C^n,g\in C^m$ and $h\in C^l$, the conditions
\[
(f\circ_i g)\circ_jh =
\begin{cases}
(f\circ_j h)\circ_{i+l-1} h& \text{if } 0\leq j \leq i-1\\
f\circ_i (g\circ_{j-i}h)  & \text{if } i\leq j \leq n-1,
\end{cases}
\]
and
\[
(f\smile g)\circ_jh =
\begin{cases}
(f\circ_i h)\smile g & \text{if } 0\leq i \leq m-1\\
f\smile (g\circ_{i-m}h)  & \text{if } m\leq i \leq m+n-1.
\end{cases}
\]
We further assume that the cup product of $C$ is associative, and that there is unit element $1\in C^1$ such that $1\circ_0 f = f\circ_i 1 = f$ for any $f\in C^m$ and $0\leq i\leq m-1$.
\end{definition}

The key example of a composition complex is the Hochschild cochain complex of a $k$-algebra $A$:
\[
C^n(A) = \Hom(A^{\otimes n}, A) \quad \text{with} \quad f\circ_ig = f \circ (1^{\otimes i} \otimes g \otimes 1^{\otimes m-i-1})
\]
for $f\in C^n$ and $g\in C^m$, and with the usual cup product 
\[
(f\smile g)(x_1\otimes \cdots x_{n+m}) = f(x_1\otimes \cdots x_n)g(x_{n+1}\otimes \cdots x_{n+m}).
\]
Other examples of composition complexes given in \cite{G} are the singular cochain complex of a topological space, and the cobar construction on a Hopf algebra.  
In general, one can work with any composition complex mimicking  constructions which are standard for  the Hochschild cochain complex, 
as the next definition shows.

\begin{definition}[\cite{G}]\label{cc_defs} We provide a brief dictionary between the context of this paper and that of composition complexes.
\begin{enumerate}
    \item\label{cc_1} 
    For $f\in C^m$ and $g\in C^n$ we define the circle product and the bracket
    \[
    f\circ g =\sum_{i=0}^{m-1} (-1)^{i(n-1)}f\circ_i g, \quad [f,g]=f\circ g-(-1)^{(m-1)(n-1)}g\circ f
    \]
    in $C^{m+n-1}$. These correspond to the usual circle product and Gerstenhaber bracket when $C=C^*(A)$.
    
    \item\label{cc_2} 
    We call $m=1\smile 1\in C^2$ the multiplication element of $C$---in the case of the Hochschild cochain complex this is  the multiplication map $A\otimes A \to A$. We then define a differential on $C$ by the rule $\partial(f)= [m,f]$, and $C$ becomes a complex $
    C^0\xrightarrow{\partial}C^1\xrightarrow{\partial}C^2\xrightarrow{\partial}\cdots$ 
    In particular we obtain cohomology groups ${\rm H}^i(C)$. When $C=C^*(A)$, these yield the usual Hochschild differential and Hochschild cohomology groups $\HH^i(A)$. 
    
    \item A derivation in $C$ is a degree one cycle $D\in \Der(A)=Z^1(C)=\ker(C^1\to C^2)$. An automorphism in $C$ is an element $\alpha\in C^1$, invertible with respect to the circle product, such that $\alpha\circ m = \alpha\smile\alpha$. When $C=C^*(A)$, these correspond to derivations and automorphisms of the $k$-algebra $A$.
    
    \item By base change, $C$ gives rise to a composition complex $C\llbracket t\rrbracket$ over the ring $k\llbracket t\rrbracket$. A one-parameter family of automorphisms in $C$ is an automorphism in $C\llbracket t\rrbracket$, and we write $\Aut_1(C\llbracket t\rrbracket)$ for the set of one-parameter families of automorphisms of the form $\alpha = 1 + \alpha_1t+\alpha_2t^2+\cdots $  A derivation $D\in \Der(C)$ is called integrable if $D=\alpha_1$ for some $\alpha\in \Aut_1(C\llbracket t\rrbracket)$. One can similarly define $[m,n)$-integrable derivations for any $m,n$ by considering automorphisms in $C[t]/(t^n)$. Once again, if $C=C^*(A)$ this yields the usual notion of integrable derivation. 
    \item Finally, if $\alpha\in \Aut_1(C[t]/(t^n))$, the obstruction theory of Subsection \ref{sub_obs} can be generalised by setting $\obs(\alpha)= [\alpha_1\smile \alpha_{n-1}+\cdots + \alpha_{n-1}\smile \alpha_1]\in {\rm H}^2(C)$.
    
\end{enumerate}
\end{definition}

With these definitions in place, the results stated in Section \ref{sec_int_Ger} all hold at the generality of any composition complex. Since Gerstenhaber was primarily concerned with automorphisms and derivations, which can be understood from the first few degrees, the results of \cite{G} are stated even more generally for composition complexes \emph{truncated in degree $2$}. That is, $k$-modules $C^0,C^1,C^2$ having the structure and properties of \cref{ccdef} to the extent that they are meaningful.

\begin{remark}
In modern terminology, a composition complex is the same thing as a nonsymmetric operad with multiplication \cite{GV}. For example, the composition complex $C^*(A)$ is the endomorphism operad of $A$. In \cite{GV} Gerstenhaber and Voronov explain how any nonsymmetric operad with multiplication inherits the structure of a B$_\infty$-algebra. This construction mirrors some of the ideas from Definition \ref{cc_defs}; in particular, they construct the bracket (\ref{cc_1}) and differential (\ref{cc_2}) exactly as was done originally in \cite{G}. Conversely there are many interesting examples of operads with multiplication (for example the Kontsevich operad used in \cite{GV}), and each can be considered as a composition complex, which thereby obtains a notion of integrable derivation and an obstruction theory as in definition \ref{cc_defs}.
\end{remark}

\end{document}